\newtheorem{theorem}{Theorem}[section]
\newtheorem{acknowledgement}[theorem]{Acknowledgement}
\newtheorem{corollary}[theorem]{Corollary}
\newtheorem{definition}[theorem]{Definition}
\newtheorem{example}[theorem]{Example}
\newtheorem{lemma}[theorem]{Lemma}
\newtheorem{proposition}[theorem]{Proposition}
\newtheorem{remark}[theorem]{Remark}
\newenvironment{proof}[1][Proof]{\noindent\textbf{#1.} }{\ \rule{0.5em}{0.5em}}
\begin{document}

\title{Leavitt path algebras with at most countably many irreducible representations}
\author{Pere Ara\\Department de Matematiques\\Universitat Autonoma de Barcelona\\08913 Bellaterra (Barcelona), Spain.\\E-mail: para@mat.uab.cat
\and Kulumani M. Rangaswamy\\Department of Mathemastics\\University of Colorado at Colorado Springs\\Colorado Springs, Colorado 80918, USA.\\E-mail: krangasw@uccs.edu }
\date{}
\maketitle

\begin{abstract}
Let $E$ be an arbitrary directed graph with no restrictions on the number of
vertices and edges and let $K$ be any field. We give necessary and sufficient
conditions for the Leavitt path algebra $L_{K}(E)$ to be of countable
irreducible representation type, that is, we determine when $L_{K}(E)$ has at most
countably many distinct isomorphism classes of simple left $L_{K}(E)$-modules.
\ It is also shown that $L_{K}(E)$ has finitely many isomorphism classes of
simple left modules if and only if $L_{K}(E)$ is a semi-artinian von Neumann
regular ring with finitely many ideals. Equivalent conditions on the
graph $E$ are also given. Examples are constructed showing that for each
(finite or infinite) cardinal $\kappa$ there exists a Leavitt path algebra
$L_{K}(E)$ having exactly $\kappa$ distinct isomorphism classes of simple
right modules.

\end{abstract}

\section{Introduction}

The notion of Leavitt path algebras was introduced and initially studied in
\cite{AA}, \cite{AMP} as algebraic analogues of graph C$^{\ast}$-algebras and
the study of their various ring-theoretic properties has been the subject of a
series of papers in recent years (see, e.g., \cite{AA} - \cite{AMMS},
\cite{R-1}, \cite{T}). In \cite{GR}, Goncalves and Royer indicated a method of
constructing various representations of a Leavitt path algebra $L_{K}(E)$ over
a graph $E$ by using the concept of algebraic branching systems. Expanding
this, Chen \cite{C} studied special types of irreducible representations of
$L_{K}(E)$ by using the sinks as well as the infinite paths which are\ not
tail-equivalent in the graph $E$ and he noted that these can also be
considered as algebraic branching systems. For additional ways of constructing
irreducible representations of $L_{K}(E)$ see \cite{ARa}.

In this paper we investigate the Leavitt path algebras $L_{K}(E)$ which are of
countable irreducible representation type, that is, $L_{K}(E)$ having at most
countably many distinct isomorphism classes of simple left/right $L_{K}%
(E)$-modules. For short, we call a Leavitt path algebra with this property as
CIRT. In many of the past investigations describing various algebraic
properties of a Leavitt path algebra $L_{K}(E)$, the individual nature of the
field $K$ did not seem to play any role at all. In this context, it may be of
of some interest to note that, when $L_{K}(E)$ is CIRT and the graph $E$
contains cycles, $K$ must necessarily be a countable field. Our structure
theorem states that $L_{K}(E)$ is CIRT if and only if it is the union of a
smooth ascending chain of countable length consisting of graded ideals
\[
0<I_{1}<\cdot\cdot\cdot<I_{\alpha}<I_{\alpha+1}<\cdot\cdot\cdot\text{\qquad
\qquad\qquad(}\alpha<\tau\text{)}\qquad\qquad(\ast\ast)
\]
where $\tau$ is a countable ordinal and, for each $0\leq\alpha<\tau$,
$I_{\alpha+1}/I_{\alpha}$ is a direct sum of at most countably many matrix
rings over $K$ and/or $K[x,x^{-1}]$. Moreover, the field $K$ is countable
whenever the graph $E$ contains cycles (equivalently, $K[x,x^{-1}]$ occurs as
a direct factor in $I_{\alpha+1}/I_{\alpha}$ for some $\alpha$).

We also show that $L_{K}(E)$ will have at most finitely many non-isomorphic
simple left/right $L_{K}(E)$-modules if and only if $L_{K}(E)$ is a
semi-artinian von Neumann regular ring with at most finitely many ideals. In
particular, when $E$ is a finite graph, then $L_{K}(E)$ has this property
exactly when $L_{K}(E)$ is an artinian semisimple ring (equivalently, the
graph $E$ is acyclic). We also construct, for each arbitrary (finite or
infinite) cardinal $\kappa$, a Leavitt path algebra for which the cardinality
of the distinct isomorphism classes of simple right modules is exactly
$\kappa$.

\section{Preliminaries}

For the general notation, terminology and results on Leavitt path algebras, we
refer the reader to \cite{AA}, \cite{AAS} and \cite{T}. We just give a short
outline of some of the needed concepts. A (directed) graph $E=(E^{0}%
,E^{1},r,s)$ consists of two sets $E^{0}$ and $E^{1}$ together with maps
$r,s:E^{1}\rightarrow E^{0}$. All the graphs $E$ that we consider here are
arbitrary with no restrictions on the number of vertices and the number of
edges emitted by a vertex. Also $K$ stands for an arbitrary field. For each
$e\in E^{1}$, we call $e^{\ast}$ a \textit{ghost} edge. We let $r(e^{\ast})$
denote $s(e)$, and we let $s(e^{\ast})$ denote $r(e)$. The set of all vertices
on the path $\mu$ is denoted by $\mu^{0}$. A vertex $v$ in $E$ is said to be {\it regular}
if $0<|s^{-1}(v)|<\infty $. A {\it singular} vertex is a vertex which is not regular.

Given an arbitrary graph $E$ and a field $K$, the \textit{Leavitt path }%
$K$\textit{-algebra }$L_{K}(E)$ is defined to be the $K$-algebra generated by
a set $\{v:v\in E^{0}\}$ of pairwise orthogonal idempotents together with a
set of variables $\{e,e^{\ast}:e\in E^{1}\}$ which satisfy the following conditions:

(1) \ $s(e)e=e=er(e)$ for all $e\in E^{1}$;

(2) $r(e)e^{\ast}=e^{\ast}=e^{\ast}s(e)$\ for all $e\in E^{1}$;

(3) (The ``CK-1 relations") For all $e,f\in E^{1}$, $e^{\ast}e=r(e)$ and
$e^{\ast}f=0$ if $e\neq f$.

(4) (The ``CK-2 relations") For every regular vertex $v\in E^{0}$,
\[
v=\sum_{e\in E^{1},s(e)=v}ee^{\ast}.
\]

A path $\mu$ $=e_{1}\dots e_{n}$ in $E$ is \textit{closed} if $r(e_{n}%
)=s(e_{1})$, in which case $\mu$ is said to be based at the vertex $s(e_{1})$.
The closed path $\mu$ is called a \textit{cycle} if $s(e_{i})\neq s(e_{j})$
for every $i\neq j$. A graph $E$ is said to satisfy \textit{Condition }$(K)$
provided no vertex $v\in E^{0}$ is the base of precisely one simple closed
path, i.e., either no simple closed path is based at $v$, or at least two are
based at $v$. An \textit{exit }for a path $\mu=e_{1}\dots e_{n}$ is an edge
$e$ such that $s(e)=s(e_{i})$ for some $i$ and $e\neq e_{i}$.

If there is a path from vertex $u$ to a vertex $v$, we write $u\geq v$. A
subset $D$ of vertices is said to be \textit{downward directed }\ if for any
$u,v\in D$, there exists a $w\in D$ such that $u\geq w$ and $v\geq w$. A
subset $H$ of $E^{0}$ is called \textit{hereditary} if, whenever $v\in H$ and
$w\in E^{0}$ satisfy $v\geq w$, then $w\in H$. A hereditary set is
\textit{saturated} if, for any regular vertex $v$, $r(s^{-1}(v))\subseteq H$
implies $v\in H$.

For any vertex $v$, the \textit{tree} of $v$ is $T(v)=\{w:v\geq w\}$. We say
there is a bifurcation at a vertex $v$, if $v$ emits more than one edge. In a
graph $E$, a vertex $v$ is called a \textit{line point} if there is no
bifurcation or a cycle based at any vertex in $T(v)$. Thus, if $v$ is a line
point, there will be a single finite or infinite line segment $\mu$ starting
at $v$ ($\mu$ could just be $v$) and any other path $\alpha$ with
$s(\alpha)=v$ will just be an initial sub-segment of $\mu$. It was shown in
\cite{AMMS} that $v$ is a line point in $E$ if and only if $vL_{K}(E)$ (and
likewise $L_{K}(E)v$) is a simple right (left) ideal. Moreover, the ideal
generated by all the line points in $E$ is the socle of $L_{K}(E)$. If $v$ is
a line point, then it is clear that any $w\in T(v)$ is also a line point.

We shall be using the following concepts and results from \cite{T} in our
investigation. Although it is assumed in \cite{T} that all the graphs are
countable, i.e. have at most countably many vertices and edges, an examination
of the proofs shows that this assumption is not needed at all. A
\textit{breaking vertex }of a hereditary saturated subset $H$ is an infinite
emitter $w\in E^{0}\backslash H$ (that is, $s^{-1}(w)$ is an infinite set)
with the property that $1\leq|s^{-1}(w)\cap r^{-1}(E^{0}\backslash H)|<\infty
$. The set of all breaking vertices of $H$ is denoted by $B_{H}$. For any
$v\in B_{H}$, $v^{H}$ denotes the element $v-\sum_{s(e)=v,r(e)\notin
H}ee^{\ast}$. Given a hereditary saturated subset $H$ and a subset $S\subseteq
B_{H}$, $(H,S)$ is called an \textit{admissible pair.} Given an admissible
pair $(H,S)$, $I_{(H,S)}$ denotes the ideal generated by $H\cup\{v^{H}:v\in
S\}$. It was shown in \cite{T} that the graded ideals of $L_{K}(E)$ are
precisely the ideals of the form $I_{(H,S)}$ for some admissibile pair $(H,S)$.
Moreover, $L_{K}(E)/I_{(H,S)}\cong L_{K}(E\backslash(H,S))$. Here
$E\backslash(H,S)$ is the \textit{Quotient graph of }$E$ in which\textit{\ }%
$(E\backslash(H,S))^{0}=(E^{0}\backslash H)\cup\{v^{\prime}:v\in
B_{H}\backslash S\}$ and $(E\backslash(H,S))^{1}=\{e\in E^{1}:r(e)\notin
H\}\cup\{e^{\prime}:e\in E^{1},r(e)\in B_{H}\backslash S\}$ and $r,s$ are
extended to $(E\backslash(H,S))^{0}$ by setting $s(e^{\prime})=s(e)$ and
$r(e^{\prime})=r(e)^{\prime}$.

If $p=e_{1}e_{2}\cdot\cdot\cdot e_{n}\cdot\cdot\cdot$ is an infinite path
where the $e_{i}$ are edges, then for any positive integer $n$, let
$\tau_{\leq n}(p)=e_{1}e_{2}\cdot\cdot\cdot e_{n}$ and $\tau_{>n}%
(p)=e_{n+1}e_{n+2}\cdot\cdot\cdot$. Two infinite paths $p$ and $q$ are said to
be \textit{tail} \textit{equivalent}, in symbols, $p\sim q$, if there exist
positive integers $m$ and $n$ such that $\tau_{>n}(p)=\tau_{>m}(q)$. Then
$\sim$ is an equivalence relation. Given an equivalence class of infinite
paths $[p]$, Chen \cite{C} defines an $L_{K}(E)$-module action on
the linear span $V_{[p]}$ of the equivalence class $[p]$ of $p$ and shows that $V_{[p]}$ becomes a
simple left $L_{K}(E)$-module. He further shows that, for infinite paths
$p,q$, $V_{[p]}\cong V_{[q]}$ if and only if $p\sim q$.

\section{Leavitt path algebras of countable irreducible representation type}

Throughout this and the following sections, $L$ will denote the Leavitt path
algebra $L_{K}(E)$. In this section, we give a complete description of the
Leavitt path algebra $L$ which has at most countably many distinct isomorphism
classes of simple left/right $L$-modules. For short, we call a Leavitt path
algebra with this property as CIRT. 

The proof of our main theorem is derived from the following series of
Propositions containing the necessary conditions for $L$ to be CIRT.

\begin{proposition}
\label{disjoint cycles}If $L$ is CIRT, then distinct cycles in $E$ must be disjoint.
\end{proposition}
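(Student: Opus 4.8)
The plan is to argue by contradiction: suppose two distinct cycles $c_1$ and $c_2$ in $E$ share a vertex. First I would reduce to a clean local picture. If $c_1$ and $c_2$ share a vertex, then along these cycles there is a vertex $v$ that lies on $c_1$ and from which one can reach, within the subgraph spanned by $c_1\cup c_2$, a vertex $w$ at which a genuine bifurcation into two distinct cycles occurs; concretely, one finds a vertex $u$ emitting two distinct edges $f_1,f_2$ such that following $f_1$ and then $c_1$-edges returns to $u$, and following $f_2$ and then $c_2$-edges returns to $u$. Thus $u$ is the base of (at least) two distinct return paths, and in particular $u$ is not a line point and every infinite path based at $u$ has plenty of exits. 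The key observation is that the number of tail-equivalence classes of infinite paths passing through such a $u$ is uncountable, and by Chen's theorem (quoted in the Preliminaries: $V_{[p]}\cong V_{[q]}$ iff $p\sim q$) each such class yields a pairwise non-isomorphic simple left $L$-module, contradicting CIRT.

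The core combinatorial step is the counting: I would produce a continuum of pairwise non-tail-equivalent infinite paths. Fix the two distinct "loops" at $u$, say $\ell_0$ (a closed path at $u$ using $f_1$ and $c_1$) and $\ell_1$ (a closed path at $u$ using $f_2$ and $c_2$), chosen so that $\ell_0$ and $\ell_1$ do not begin with the same edge. For each infinite binary sequence $x=(x_1,x_2,\dots)\in\{0,1\}^{\mathbb N}$ form the infinite path $p_x=\ell_{x_1}\ell_{x_2}\ell_{x_3}\cdots$. Two such paths $p_x$ and $p_y$ are tail-equivalent only if $x$ and $y$ agree from some point on; since there are uncountably many eventually-different binary sequences, we obtain uncountably many tail-equivalence classes $[p_x]$, hence uncountably many pairwise non-isomorphic Chen modules $V_{[p_x]}$. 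This contradicts the hypothesis that $L$ has at most countably many isomorphism classes of simple left modules. (The same works on the right by the symmetric version of Chen's construction, or one simply notes CIRT is stated for left/right simultaneously.)

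The step I expect to be the main obstacle is the first reduction: carefully extracting, from "two distinct cycles sharing a vertex," the vertex $u$ together with two closed paths $\ell_0,\ell_1$ at $u$ that start with different edges. One must handle the cases according to how $c_1$ and $c_2$ overlap — they might share a single vertex, or a whole arc of consecutive edges — and rule out the degenerate possibility that every traversal forces the same first edge (which cannot happen precisely because $c_1\neq c_2$, so at some shared vertex the two cycles diverge along different edges, and conversely at some shared vertex they must reconverge, since both are cycles through $u$). Once $u$, $\ell_0$, $\ell_1$ with distinct initial edges are in hand, the verification that distinct binary sequences (up to eventual equality) give non-tail-equivalent paths is routine — one reads off the decomposition into $\ell$-blocks from any sufficiently long tail, using that $\ell_0$ and $\ell_1$ have different first edges and comparable "syllable" structure — and the conclusion follows immediately from Chen's theorem.
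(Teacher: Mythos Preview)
Your proposal is correct and follows essentially the same idea as the paper: encode binary sequences as infinite paths built from two closed paths at a common vertex, then invoke Chen's theorem to obtain uncountably many non-isomorphic simple modules. The paper streamlines the step you flag as the main obstacle: instead of first reducing to loops with distinct initial edges and then arguing directly that eventually-different binary sequences give non-tail-equivalent paths, it simply observes that within the family $\{p_S : S\subseteq\mathbf{P}\}$ each tail-equivalence class is countable (its members are parametrized by a finite prefix in $\{g,h\}$ and a shift index), so the uncountable family must break into uncountably many classes---no careful ``syllable'' parsing is needed.
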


\begin{proof}
We may assume that $E$ contains cycles, since there is nothing to prove if $E$
is acyclic. Suppose that there are two different cycles $g,h$ based at the
same vertex $v$. Consider the infinite path $ggg\cdot\cdot\cdot$ which, for
convenience, we write as $p=g_{1}g_{2}g_{3}\cdot\cdot\cdot$ indexed by the set
$\mathbf{P}$ of positive integers where $g_{i}=g$ for all $i$. Now for every
subset $S$ of $\mathbf{P}$, define an infinite path $p_{S}$ by replacing
$g_{i}$ by $h$ if and only if $i\in S$. Observe that this gives an uncountable
family of pairwise different infinite paths on $E$. Denote by $Y$ this family
of infinite paths. Now, if $q=\beta_{1}\beta_{2}\cdots$ is a path in $Y$, then
the paths in $Y$ which are tail-equivalent to $q$ have the form $\gamma
_{1}\cdots\gamma_{j}\beta_{i+1}\beta_{i+2}\beta_{i+3}\cdots$, where $i,j$ are
non-negative integers and $\gamma_{1},\dots,\gamma_{j}\in\{g,h\}$. This is a
countable family and so each tail-equivalence class contains countably many
infinite paths in $Y$. Consequently, there are uncountably many distinct tail
equivalence classes of paths in $Y$. By Chen \cite{C}, $L$ then has
uncountably many non-isomorphic simple left/right $L$-modules. This
contradicts the fact that $L$ is a CIRT. Hence no two cycles in $E$ have a
common vertex.
\end{proof}

\begin{proposition}
\label{countable field}If the graph $E$ contains cycles and $L$ is CIRT, then
the field $K$ must be countable.
\end{proposition}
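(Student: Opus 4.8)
The plan is to exploit that a single cycle in $E$ already produces $|K|$ pairwise non-isomorphic simple left $L$-modules; since $L$ is CIRT this cardinal is at most $\aleph_{0}$, which forces $K$ to be countable. If $K$ is finite there is nothing to prove, so assume $K$ is infinite, in which case $|K|=|K^{\times}|$.

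The first step is to reduce to a cycle without exits. Let $c=e_{1}\cdots e_{n}$ be a cycle in $E$ based at $v$. The key combinatorial claim is that for every exit $e$ of $c$ (say $s(e)=s(e_{k})$ with $e\neq e_{k}$) one has $T(r(e))\cap c^{0}=\varnothing$. Indeed, if some path led from $r(e)$ to a vertex of $c^{0}$, take one of minimal length and let $w$ be its first vertex lying in $c^{0}$; concatenating $e$, the initial segment of that path up to $w$, and the arc of $c$ from $w$ back to $s(e_{k})$ produces a closed path at $s(e_{k})$ with no repeated vertex other than its base, i.e.\ a cycle, which is distinct from $c$ (it uses $e$) yet meets $c$ at $s(e_{k})$ --- contradicting Proposition \ref{disjoint cycles}. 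Let $H$ be the hereditary saturated closure of $\{r(e):e\text{ an exit of }c\}$. By the claim the hereditary part of $H$ avoids $c^{0}$, and no vertex $s(e_{i})$ of $c^{0}$ can be added in the saturation process because its $c$-successor $s(e_{i+1})$ again lies in $c^{0}$ and would have to be present first; hence $H\cap c^{0}=\varnothing$. Taking $S=B_{H}$, the quotient graph $\overline{E}:=E\backslash(H,B_{H})$ is the restriction of $E$ to $E^{0}\backslash H$, in which $c$ survives and has no exits, since every exit edge of $c$ in $E$ has range in $H$ and is therefore deleted.

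By the description of graded ideals recalled in the Preliminaries, $L_{K}(\overline{E})\cong L/I_{(H,B_{H})}$, so every simple $L_{K}(\overline{E})$-module is a simple $L$-module. Since $c$ has no exits in $\overline{E}$, the standard no-exit-cycle computation (see \cite{C}, \cite{AAS}) gives $\overline{v}\,L_{K}(\overline{E})\,\overline{v}\cong K[x,x^{-1}]$, with $c$ corresponding to $x$. For any ring $R$ and idempotent $\varepsilon$, $M\mapsto\varepsilon M$ is a bijection between isomorphism classes of simple $R$-modules $M$ with $\varepsilon M\neq 0$ and isomorphism classes of simple $\varepsilon R\varepsilon$-modules; applying this with $\varepsilon=\overline{v}$, the distinct maximal ideals $(x-\lambda)$, $\lambda\in K^{\times}$, of $K[x,x^{-1}]$ yield $|K^{\times}|=|K|$ pairwise non-isomorphic simple $L_{K}(\overline{E})$-modules, hence at least $|K|$ isomorphism classes of simple left $L$-modules. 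As $L$ is CIRT, $|K|\leq\aleph_{0}$, i.e.\ $K$ is countable.

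The main obstacle is the combinatorial claim that an exit of a cycle cannot lead back to that cycle: this is where Proposition \ref{disjoint cycles} is genuinely used, and extracting a cycle through the prescribed base vertex from the closed path requires choosing the returning path of minimal length. An alternative that bypasses the reduction is to invoke directly the simple $L$-modules attached to a cycle $c$ and a monic irreducible polynomial $f\neq x$ --- the Chen module of the periodic path $c^{\infty}$ twisted by $f$ --- constructed in \cite{C}, \cite{ARa}: these remain pairwise non-isomorphic for non-associate $f$ even when $c$ has exits, and taking $f=x-\lambda$ again produces $|K|$ of them. Everything else is a routine cardinality count.
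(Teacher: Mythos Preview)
Your proof is correct and follows the same overall strategy as the paper: reduce to a cycle without exits in a quotient of $E$, and then count simple modules arising from $K[x,x^{-1}]$. The implementation differs in two places. First, the paper takes $H=\{u\in E^{0}:u\ngeq v\}$, so that $E^{0}\setminus H=M(v)$ is automatically downward directed and $c$ has no exits there; you instead take the hereditary saturated closure of the ranges of the exits of $c$, which requires your explicit combinatorial argument (correct, and precisely where Proposition~\ref{disjoint cycles} enters) to see that $H\cap c^{0}=\varnothing$. Second, to produce $|K|$ non-isomorphic simples the paper invokes Theorems~3.12 and~4.3 of \cite{R-1}, which attach to each irreducible $g\in K[x,x^{-1}]$ a distinct primitive ideal of $L$; you instead use the corner isomorphism $\overline{v}\,L_{K}(\overline{E})\,\overline{v}\cong K[x,x^{-1}]$ together with the standard idempotent bijection between simple $\varepsilon R\varepsilon$-modules and simple $R$-modules not killed by $\varepsilon$. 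Your route is more self-contained (it avoids the prime-ideal machinery of \cite{R-1}), at the cost of a slightly longer combinatorial verification; the paper's route is shorter given the cited results. The alternative you sketch via the twisted Chen modules of \cite{C}, \cite{ARa} is also valid and bypasses the quotient construction entirely.
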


\begin{proof}
Consider a cycle $c$ based at a vertex $v$ in $E$. Then $H=\{u\in E^{0}:u\ngeqq
v\}$ is a hereditary saturated set and $E^{0}\backslash H=$ $M(v)$ is downward
directed. By Proposition \ref{disjoint cycles}, distinct cycles have no common
vertex. Hence $c$ is a cycle without exits in $M(v)$. By Theorems 3.12 and 4.3
of \cite{R-1}, each irreducible polynomial $g(x)\in K[x,x^{-1}]$ then gives
rise to a primitive ideal $P_{g}$ of $L$, generated by $I_{(H,B_{H})}\cup\{g(c)\}$ 
(and a corresponding simple module having $P_{g}$ as its annihilator). Since
$L$ is CIRT, $K[x,x^{-1}]$ cannot have uncountably many irreducible
polynomials. This implies that $K$ must be a countable field.
\end{proof}

\bigskip

 We next establish an important property for $E$ when $L$ is CIRT.

\begin{proposition}
\label{CIRT implies line points or NE cycles}If $L$ is CIRT, then the graph
$E$ contains line points or cycles without exits, or both.
\end{proposition}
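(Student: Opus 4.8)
The plan is to argue by contradiction: suppose $L = L_K(E)$ is CIRT but $E$ has neither line points nor cycles without exits. Since $E$ can contain no cycle without exits, and by Proposition~\ref{disjoint cycles} all cycles are disjoint, every cycle in $E$ has an exit. Since $E$ contains no line points, every vertex $v$ has a vertex $w \in T(v)$ at which there is either a bifurcation or a cycle; in either case, chasing this repeatedly, one sees that from every vertex there are at least two distinct paths leaving it that are ``genuinely'' different — this is the mechanism I want to exploit to manufacture uncountably many tail-inequivalent infinite paths, which then by Chen's result gives uncountably many non-isomorphic simple modules, contradicting CIRT.

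Concretely, I would first reduce to producing a vertex $v_0$ together with a way of encoding an arbitrary subset $S \subseteq \mathbf{P}$ (or an arbitrary element of $\{0,1\}^{\mathbf N}$) into an infinite path $p_S$ starting at $v_0$, in such a way that distinct $S$ give tail-inequivalent paths. The key combinatorial step: starting from $v_0$, because $v_0$ is not a line point there is some $w_1 \in T(v_0)$ at which there is a bifurcation or a cycle. If there is a bifurcation at $w_1$, say edges $e \neq f$ with $s(e) = s(f) = w_1$, I extend past $w_1$ using either $e$ or $f$ according to the first ``bit'' of $S$; then I continue from $r(e)$ (resp. $r(f)$), which again is not a line point, find the next bifurcation/cycle vertex $w_2$, and iterate. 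If instead the first obstruction encountered is a cycle $c$ based at some vertex, then since $c$ has an exit (no cycles without exits) there is an edge leaving some vertex of $c$ that is not on $c$; traversing $c$ a variable number of times before taking the exit — or combining this with the bifurcation case — lets me encode a bit. Either way I build $p_S$ as a concatenation of blocks, block $n$ recording the $n$-th bit of $S$, with the blocks ``growing'' so that no tail of $p_S$ can coincide with a tail of $p_{S'}$ for $S \neq S'$: a tail of $p_S$ still exhibits infinitely many of the recorded bits of $S$ in order, so from a common tail one could read off $S = S'$.

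The main obstacle I expect is this last bookkeeping — ensuring genuine tail-inequivalence rather than mere distinctness of the paths, exactly the subtlety handled carefully in the proof of Proposition~\ref{disjoint cycles}. The cleanest route is probably to show that the map $S \mapsto [p_S]$ (tail-equivalence class) is countable-to-one: if $p_S \sim p_{S'}$ then $\tau_{>n}(p_S) = \tau_{>m}(p_{S'})$ for some $m,n$, and since each block of $p_S$ has length at least its index, for $k$ large the $k$-th block of $p_S$ sits entirely inside the common tail and coincides with a block of $p_{S'}$, forcing the $k$-th bit of $S$ to equal some bit of $S'$; tracking the index shift shows $S$ and $S'$ agree from some point on, so each class $[p_S]$ meets only countably many of the $p_{S'}$. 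Hence there are uncountably many distinct tail-equivalence classes among $\{p_S : S \subseteq \mathbf{P}\}$, and by Chen \cite{C} the modules $V_{[p_S]}$ give uncountably many pairwise non-isomorphic simple left $L$-modules, contradicting that $L$ is CIRT. Therefore $E$ must contain a line point or a cycle without exits.
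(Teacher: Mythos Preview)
Your overall strategy---produce $2^{\aleph_0}$ infinite paths and show only countably many can share a tail-equivalence class, then invoke Chen---is sound, and the inductive ``find the next bifurcation'' construction is fine (a cycle with an exit yields a bifurcation on the cycle, so that case folds into the other). The gap is in your countable-to-one argument. You claim that if $\tau_{>n}(p_S)=\tau_{>m}(p_{S'})$ then for large $k$ the $k$-th block of $p_S$ ``coincides with a block of $p_{S'}$'', forcing the bits to agree cofinitely. But the block structure of $p_{S'}$ is built from a \emph{different} sequence of bifurcation vertices: once $S$ and $S'$ first differ, the two constructions proceed through different parts of the graph, and the subsequent bifurcation points $w_k^S$ and $w_k^{S'}$ need not be related at all. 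A segment of the common tail that is the $k$-th block of $p_S$ may straddle several blocks of $p_{S'}$ or lie strictly inside one; and even if two blocks did line up as edge sequences, the ``bit'' each records is relative to a different bifurcation, so no comparison of bits follows. Your growing-block-length trick does not rescue this, because it only controls lengths, not alignment.

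The easy fix is to notice that your entire tree of bifurcations, together with the finite connecting paths, uses only countably many vertices and edges; hence all the $p_S$ live in a countable subgraph $G$ of $E$. In any countable graph, a fixed infinite path has only countably many tails and each tail has only countably many finite prefixes, so each tail-equivalence class is countable; since $|\{p_S\}|=2^{\aleph_0}$, uncountably many classes survive. This is exactly the cardinality step the paper uses, but the paper reaches the uncountable supply of paths by a different, more structural route: it first passes to a countable complete subgraph $G$, puts the Webster topology on the path space of $G$, observes that the absence of line points and no-exit cycles forces no isolated points, and concludes that $\mathcal Z(v)$ is a Cantor set, hence of size $2^{\aleph_0}$. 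Your explicit binary-tree construction is more elementary and avoids the topology, but you must replace the block-alignment claim with the countable-subgraph counting argument above.
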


\begin{proof}
Suppose, by way of contradiction, the graph $E$ contains no line points and no
cycles without exits. \ 

Let $v$ be a vertex in $E$, and set $F=T(v)$, seen as a subgraph of $E$, that
is, we consider the tree of $v$ together with all edges in $E$ connecting the
vertices of $T(v)$. It is enough to show that $F$ has uncountably many
tail-equivalence classes of infinite paths. We wish to build a countable
complete subgraph $G$ of $F$ having the same properties.

First, we construct an increasing family of countable complete subgraphs
$F_{n}$ starting with $F_{0}=\{v\}$. Indeed, the graphs $F_n\subseteq F$ will satisfy
the property that, for $u\in(F_{n})^{0}$,
if $u$ is a regular vertex in $F_{n}$, then $s_{F_{n}}^{-1}(u)=s_{E}^{-1}(u)$.
This clearly implies that $F_n$ is a complete subgraph of $E$,
see \cite[Definition 1.6.7]{AAS}. Now assume that $F_{n}$ has been built for
some $n\geq0$. For each vertex $u\in(F_{n})^{0}$ which is a sink in
$F_{n}$ and which is a regular vertex in $E$, add to $F_{n}$ all the
finitely many edges emitted by $u$ in $E$ and the end vertices of these edges.
For each vertex $u\in(F_{n})^{0}$ which is a sink in $F_{n}$ and which
is an infinite emitter in $E$, add a countable number of edges emitted by $u$
in $E$ and the corresponding end vertices of these edges. This way we get the
new subgraph $F_{n+1}$. Let $G$ be the union of all the $F_{n}$'s. Then $G$ is
a complete subgraph of $F$, and $G$ contains no line points and no cycles
without exits.

Clearly $L_{K}(G)$ is a subalgebra of $L_{K}(F)$ and the map sending an
infinite path in $G$ to the same infinite path in $F$ induces an injective map
from the set $T_{G}$ of the tail-equivalence classes of infinite paths in $G$
to the set $T_{F}$ of the corresponding tail-equivalence classes of infinite
paths in $F$. Thus if $T_{G}$ is uncountable so is $T_{F}$.

To show that the set $T_{G}$ is uncountable, let $X$ be the set of all
infinite paths in $G$ together with the finite paths in $G$ ending in a
singular vertex. For each finite path $\gamma$ (which could also be a vertex),
define 
$$\mathcal{Z}(\gamma)=\{  p\in X \text{ such that } p=\gamma p^{\prime},
\text{ where } p^{\prime} \text{ is some path in } G \}.$$ Then $X$ can be made a locally compact
Hausdorff space, and the collection
\[
\{\mathcal{Z}(\gamma)\setminus\bigcup_{e\in\mathcal{F}}\mathcal{Z}(\gamma
e)\},
\]
for $\gamma$ a finite path and $\mathcal{F}$ a finite set of edges starting at
$r(\gamma)$, is a basis of compact open sets in $X$ (see Theorem 2.1,
\textbf{\cite{W}}). Since $G$ is a countable graph, these sets form a
countable basis of open and closed (compact) sets for $X$ and hence $X$ is
second countable. Since $G$ does not have line points or cycles without exits,
we see that $X$ has no isolated points.

Now $\mathcal{Z}(v)$ is a compact Hausdorff totally disconnected space with no
isolated points and hence is isomorphic to the Cantor space (see Theorem 13,
\cite{MHS}). This means $\mathcal{Z}(v)$ contains $2^{\aleph_{0}}$ infinite
paths. Since $G$ is countable, an argument similar to the one used in the
proof of Proposition \ref{disjoint cycles} shows that the tail-equivalence
classes of paths in $\mathcal{Z}(v)$ are at most countable. Since
$|\mathcal{Z}(v)|=2^{\aleph_{0}}$, we conclude that $G$ (and hence $E$)
contains $2^{\aleph_{0}}$ infinite paths which are not mutually
tail-equivalent. By Chen \cite{C}, $L$ will then have $2^{\aleph_{0}}$
non-isomorphic simple left/right $L$-modules, a contradiction.

Hence $E$ always contains line points and/or cycles without exits.
\end{proof}

 For a ring $R$ and any index set 
 $\Lambda$, we denote by $M_{\Lambda}(R)$ the ring of matrices with coefficients in $R$, indexed on $\Lambda$, 
 and having only finitely many nonzero entries. 

We are now ready to prove the main result of this section.
 
\begin{theorem}
\label{CIRT}Let $E$ be an arbitrary graph and $K$ be any field. Then the
following properties are equivalent:

(i) $\ \ L=L_{K}(E)$ is CIRT;

(ii) \ $L$ is the union of a well-ordered smooth ascending chain of countable
length consisting of graded ideals
\[
0<I_{1}<\cdot\cdot\cdot<I_{\alpha}<I_{\alpha+1}<\cdot\cdot\cdot\text{\qquad
\qquad\qquad(}\alpha<\tau\text{)}\qquad\qquad(\ast\ast)
\]
where, $\tau$ is a countable ordinal, for each $0\leq\alpha<\tau$,
$I_{\alpha+1}/I_{\alpha}$ is a direct sum of at most countably many matrix
rings $M_{\Lambda}(R)$, where $\Lambda$ are arbitrarily-sized index sets and $R$ is either $K$ or $K[x,x^{-1}]$. 
Moreover, $K$ will be a countable field
whenever $E$ contains cycles.
\end{theorem}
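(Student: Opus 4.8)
The plan is to prove the two implications of the equivalence separately and then the ``Moreover'' clause, the substantive work being concentrated in the identification of the successive quotients in the $(i)\Rightarrow(ii)$ direction.

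\emph{From (ii) to (i).} Suppose $L=\bigcup_{\alpha<\tau}I_{\alpha}$ as in (ii), and let $M$ be a simple left $L$-module. Since $LM=M\neq 0$ and the chain is smooth (so that $I_{\lambda}M=0$ whenever $I_{\alpha}M=0$ for all $\alpha<\lambda$), there is a least ordinal of the form $\alpha+1$ with $I_{\alpha+1}M\neq 0$; then $I_{\alpha}M=0$ and $M=I_{\alpha+1}M$, so $M$ is a simple module over $\overline{I}_{\alpha}:=I_{\alpha+1}/I_{\alpha}$ and its $L$-structure is recovered from its $\overline{I}_{\alpha}$-structure. Writing $\overline{I}_{\alpha}\cong\bigoplus_{j\in\Omega_{\alpha}}M_{\Lambda_{j}}(R_{j})$ with $\Omega_{\alpha}$ countable and $R_{j}\in\{K,K[x,x^{-1}]\}$, a simple module over such a direct sum is killed by all summands but one, over which it is a simple module, hence corresponds to a simple $R_{j}$-module: one class when $R_{j}=K$, and one per maximal ideal of $K[x,x^{-1}]$ otherwise. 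A factor $K[x,x^{-1}]$ occurs only when $E$ has a cycle, in which case $K$ is countable (hypothesis of (ii)), so $K[x,x^{-1}]$ has countably many irreducible polynomials; thus each $\overline{I}_{\alpha}$ has at most countably many simple modules. Assigning to a simple $L$-module the pair (its level $\alpha$, its class as an $\overline{I}_{\alpha}$-module) is injective on isomorphism classes, so with $\tau$ a countable ordinal $L$ is CIRT; the chain $(\ast\ast)$ being left--right symmetric, the right-module count is the same.

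\emph{From (i) to (ii): building the chain.} Since the simple modules of a graded quotient $L/I_{(H,S)}\cong L_{K}(E\setminus(H,S))$ are exactly the simple $L$-modules killed by $I_{(H,S)}$, CIRT is inherited by all graded quotients. Define graded ideals by transfinite recursion: $I_{0}=0$; $I_{\lambda}=\bigcup_{\alpha<\lambda}I_{\alpha}$ at limits (a graded ideal, so the chain is smooth); and, if $I_{\alpha}\neq L$, let $I_{\alpha+1}$ be the preimage in $L$ of the ideal $J_{\alpha}$ of $\overline{L}_{\alpha}:=L/I_{\alpha}\cong L_{K}(E_{\alpha})$ generated by all line points of $E_{\alpha}$ together with all vertices on cycles without exits of $E_{\alpha}$. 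By Proposition \ref{CIRT implies line points or NE cycles} applied to the CIRT algebra $\overline{L}_{\alpha}$, $E_{\alpha}$ has line points or cycles without exits, so $J_{\alpha}\neq 0$; hence the chain strictly increases until it reaches $L$, which by cardinality it does, and after reindexing (appending $L$ as last term if that happens at a successor) we get a chain of the form $(\ast\ast)$ with union $L$. Each nonzero layer $J_{\alpha}$ has a simple module occurring as an $L$-module at level exactly $\alpha$, so distinct layers give non-isomorphic simple $L$-modules and the length $\tau$ is countable because $L$ is CIRT.

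\emph{From (i) to (ii): identifying the layers --- the main obstacle.} One must show each $J_{\alpha}$ is a direct sum of at most countably many matrix rings $M_{\Lambda}(R)$, $R\in\{K,K[x,x^{-1}]\}$. The structural inputs: the ideal generated by the line points of $E_{\alpha}$ is the socle of $\overline{L}_{\alpha}$, which decomposes as $\bigoplus_{i}M_{\Lambda_{i}}(K)$ (one homogeneous component per maximal line point, $\Lambda_{i}$ indexing paths into it); by Proposition \ref{disjoint cycles} the cycles without exits of $E_{\alpha}$ are mutually disjoint and terminal, and the ideal they generate decomposes as $\bigoplus_{j}M_{\Gamma_{j}}(K[x,x^{-1}])$ ($\Gamma_{j}$ indexing paths ending at the $j$-th cycle), as in Theorems 3.12 and 4.3 of \cite{R-1}; these two ideals are complementary, since the second has zero socle while the first is the whole socle. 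Finally, adjoining a regular vertex $v$ with $r(s^{-1}(v))$ already in the generated ideal adds nothing, because $v=\sum_{s(e)=v}ee^{\ast}$ and $ee^{\ast}=e\,r(e)\,e^{\ast}$ return $v$ to it, so the saturation involved in forming the graded ideal does not enlarge $J_{\alpha}$. Hence $J_{\alpha}\cong\bigl(\bigoplus_{i}M_{\Lambda_{i}}(K)\bigr)\oplus\bigl(\bigoplus_{j}M_{\Gamma_{j}}(K[x,x^{-1}])\bigr)$, and CIRT bounds the number of summands by $\aleph_{0}$ since distinct summands yield non-isomorphic simple modules. I expect this paragraph to be the delicate one: making precise that the ``line-point part'' and ``cycle part'' of $J_{\alpha}$ are honest matrix rings over $K$ and $K[x,x^{-1}]$ with the stated index sets and that they assemble as a direct sum; the transfinite bookkeeping of the previous paragraph is routine.

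\emph{The ``Moreover'' clause.} If $E$ has a cycle then $K$ is countable by Proposition \ref{countable field}. A factor $K[x,x^{-1}]$ in some $J_{\alpha}$ visibly forces a cycle; conversely, given a cycle $c$ at $v$, note that a vertex lies in $I_{(H,S)}$ only if it lies in $H$, so from $v\in L=\bigcup_{\alpha}I_{(H_{\alpha},S_{\alpha})}$ there is a least $\alpha$ with $v\in H_{\alpha+1}$, and the image of $c$ survives as a cycle in $E_{\alpha}$. Tracing around this cycle and using Proposition \ref{disjoint cycles} with the stage-by-stage construction of the saturated closure (a cycle vertex is never a line point, and a regular cycle vertex carrying an exit cannot be absorbed by saturation until that exit has been), one checks that $v$ can enter $H_{\alpha+1}$ only after the cycle has become exitless in $E_{\alpha}$; it then contributes a $K[x,x^{-1}]$-summand to $J_{\alpha}$, giving the asserted equivalence.
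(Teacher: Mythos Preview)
Your proof is correct and takes essentially the same approach as the paper: the same transfinite construction via Proposition \ref{CIRT implies line points or NE cycles}, the same reduction of simple $L$-modules to simple modules over the successive quotients $I_{\alpha+1}/I_{\alpha}$ for (ii)$\Rightarrow$(i), and the same appeal to Proposition \ref{countable field} for the Moreover clause. The only notable differences are that the paper simply cites \cite{AAS} for the matrix-ring decomposition of each layer where you supply a sketch, and that your final paragraph proves more than the theorem statement requires---the equivalence between ``$E$ contains cycles'' and ``some layer has a $K[x,x^{-1}]$-summand'' is mentioned only in the introduction, not asserted in the theorem itself.
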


\begin{proof}
Assume (i). By Proposition \ref{CIRT implies line points or NE cycles}, the
graph $E$ contains line points and/or cycles without exits. Let $I_{1}$ be the
ideal generated by all the line points and vertices on all the cycles without
exits in $E$. Then $I_{1}=I(H,\emptyset)$, where $H=I_{1}\cap E^{0}$. 
By \cite{AAS}, $I_{1}$ is a direct sum of matrices of the form
$M_{n_{i}}(K)$ and/or $M_{l_{j}}(K[x,x^{-1}])$ where $n_{i},l_{j}$ are cardinal numbers. 
Suppose we have defined a graded ideal $I_{\alpha
}=I(H_{\alpha},B_{\alpha})$ for some $\alpha\geq1$ where $H_{\alpha}
=I_{\alpha}\cap E^{0}$ and $B_{\alpha}=\{v\in B_{H_{\alpha}}:v^{H_{\alpha}}\in
I_{\alpha}\}$. If $I_{\alpha}\neq L$, consider $L/I_{\alpha}\cong
L_{K}(E\backslash (H_{\alpha},B_{\alpha}))$. Now $L/I_{\alpha}$ is CIRT and
hence, by Proposition \ref{CIRT implies line points or NE cycles},
$E\backslash(H_{\alpha},B_{\alpha})$ contains line points and/or cycles
without exits. Define the ideal $I_{\alpha+1}\supset I_{\alpha}$ so that
$I_{\alpha+1}/I_{\alpha}$ is the ideal generated by all the line points and
the vertices on all the cycles without exits in $E\backslash(H_{\alpha
},B_{\alpha})$. By \cite{AAS}, $I_{\alpha+1}/I_{\alpha}$ is then a direct sum
of matrices of the form $M_{n_{i}}(K)$ and/or $M_{l_{j}}(K[x,x^{-1}])$ where
$n_{i},l_{j}$ are cardinal numbers. If $\gamma$ is a limit ordinal and $I_{\alpha}$ has been
defined for all $\alpha<\gamma$, then define $I_{\gamma}=
{\textstyle\bigcup\limits_{\alpha<\gamma}}
I_{\alpha}$. By transfinite induction, we then obtain a smooth ascending chain
$(\ast\ast)$ of graded ideals with the desired properties and its union is
$L$. Now each graded ideal is isomorphic to the Leavitt path algebra of a
suitable graph (see \cite{RT}) and so is a ring with local units. Hence every
left/right ideal of $I_{\alpha+1}$ is also a left/right ideal of $L$. It is
then readily seen that every simple $I_{\alpha+1}$-module is isomorphic to a
simple $L$-module. Since $L$ is CIRT, the length of the chain $\tau$ is
countable and that, for each $\alpha<\tau$, $I_{\alpha+1}/I_{\alpha}$ is a
direct sum of at most countably many matrices of the form $M_{n_{i}}(K)$
and/or $M_{l_{j}}(K[x,x^{-1}])$.

Moreover, Proposition \ref{countable field} shows that the field $K$ must be
countable if $E$ contains cycles. This proves (ii).

Assume (ii) so that $L$ is the union of a chain of graded ideals 
satisfying the stated properties. Let $S=L/M$ be a simple module where $M$ is a
maximal left ideal of $L$. Let $\beta$ be the smallest ordinal such that
$I_{\beta}\nsubseteq M$. Clearly $\beta$ is a non-limit ordinal. Let
$\beta=\alpha+1$. Then $I_{\alpha}\subseteq M$. So
\[
S=(M+I_{\alpha+1})/M\cong I_{\alpha+1}/(I_{\alpha+1}\cap M)\cong(I_{\alpha
+1}/I_{\alpha})/[(I_{\alpha+1}\cap M)/I_{\alpha}]\text{.}%
\]
Thus every simple left $L$-module is isomorphic to a simple left module over
the ring $I_{\alpha+1}/I_{\alpha}$ for some $\alpha$ with $0\leq\alpha<\tau$.
Now for each $\alpha$, $I_{\alpha+1}/I_{\alpha}$ is CIRT, due to the fact that
$I_{\alpha+1}/I_{\alpha}$ is a direct sum of at most countably many matrix
rings $M_{n_{k}}(K)$ and matrix rings $M_{r_{j}}(K[x,x^{-1}])$, where
$n_{k}, r_{j}$ are arbitrarily-sized cardinal numbers  and that $K$ is countable whenever $r_{j}\neq 0$ for some
$r_{j}$. Since $\tau$ is a countable ordinal, it is readily seen that $L$ is
also CIRT.
\end{proof}

\begin{remark}
In Theorem \ref{CIRT}, if the graph $E$ is acyclic, then for every
$\alpha<\tau$, $I_{\alpha+1}/I_{\alpha}$ is isomorphic to a direct sum of
matrix rings over $K$ \ and hence is a direct sum of simple modules. Thus the
chain $(\ast\ast)$ becomes the socular chain for $L$. In other words, $L$
becomes a semi-artinian von Neumann regular ring with countable Loewy length.
Note that, in this case, there is no restriction on the cardinality of the
field $K$.
\end{remark}

\section{Leavitt path algebras of finite irreducible representation type}

We now wish to specialize to the case when the Leavitt path algebra
$L=L_{K}(E)$ is of finite irreducible representation type, that is, when $L$
has at most finitely many distinct isomorphism classes of simple left
$L$-modules. In this case, $L$ satisfies conditions stronger than those of
Theorem \ref{CIRT}. Specifically, $L$ becomes a semi-artinian von Neumann
regular ring with finitely many two-sided ideals. Moreover, the result holds
for arbitrary fields $K$ without any restrictions.

We begin with the following Lemma and its Corollary which are used in the
proof of Theorem \ref{FIRT}.

\begin{lemma}
\label{Quotient simple}Suppose $R$ is a von Neumann regular ring and $A$ is a non-zero proper ideal of $R$. 
Then every simple left $R$-module is
isomorphic to a simple left module over $R/A$ or $A$. Conversely, every
simple left module over $R/A$, or $A$ is isomorphic to some simple
left $R$-module.
\end{lemma}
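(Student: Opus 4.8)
The plan is to prove both directions by exploiting the fact that in a von Neumann regular ring $R$, every ideal $A$ is itself von Neumann regular and is idempotent, and moreover $A = RAR$ acts on any $R$-module by an idempotent (in the sense that $AM$ and the annihilated part of $M$ split off nicely). The crucial structural input is that a von Neumann regular ring has no nonzero nil one-sided ideals, so for a simple left $R$-module $S = R/M$ with $M$ a maximal left ideal, one has a clean dichotomy: either $AS = 0$ or $AS = S$.

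First I would take a simple left $R$-module $S$ and consider $AS$. Since $AS$ is a submodule of the simple module $S$, either $AS = 0$ or $AS = S$. In the first case, $A \subseteq \mathrm{ann}_R(S)$, so $S$ is naturally a module over $R/A$, and it is simple there since the submodule lattice is unchanged. In the second case, I claim $S$ is simple as a left $A$-module. Here I would use that $A$ is von Neumann regular and hence has local units: write $S = As$ for some $s \in S$ with $s = as'$, and since $A$ has local units there is an idempotent $u \in A$ with $us = s$; then $S = As = Aus$, and for any $0 \neq t \in S$ we similarly get an idempotent $e \in A$ with $et = t$, and then $At$ is a nonzero $R$-submodule (because $RAt = At$ by idempotency of $A$, or more directly because $At$ is an $A$-submodule and $S$ viewed over $R$ is generated from it), forcing $At = S$; hence $S$ is simple over $A$. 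The only subtlety is to check that an $A$-submodule of $S$ is automatically an $R$-submodule here — this follows because $S = AS$ implies $RS = R(AS) = (RA)S \subseteq AS = S$ is already contained in $AS$, so the $R$- and $A$-submodule structures on $S$ coincide once $AS = S$.

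For the converse, a simple $R/A$-module is a fortiori a simple $R$-module via the quotient map, which is immediate. For a simple left $A$-module $T$: since $A$ has local units, $AT = T$, and I would form the induced module or, more concretely, observe that $T$ is already an $R$-module. Indeed, because $A$ is an ideal of $R$ and $A$ has local units acting as identity on $T$, one extends the action by $r \cdot t := r \cdot (ut)$ where $u \in A$ is a local unit for $t$ — one checks this is well-defined and associative using that $ru \in A$ for $r \in R$, $u \in A$. Then any $R$-submodule of $T$ is in particular an $A$-submodule, so $T$ is simple over $R$. I expect the main obstacle to be the bookkeeping around local units and verifying that the two module structures (over $A$ and over $R$) are compatible and have the same submodule lattices; once the local-unit idempotents are in hand, each step is short, and the von Neumann regularity of $A$ (inherited from $R$) is exactly what guarantees those idempotents exist.
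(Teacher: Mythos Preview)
Your proposal is correct and follows essentially the same approach as the paper: the dichotomy $AS=0$ versus $AS=S$ is exactly the paper's split into $A\subseteq M$ versus $A\nsubseteq M$ for the maximal left ideal $M$, and in the converse direction both arguments use the idempotents in $A$ coming from von Neumann regularity to promote the $A$-module structure to an $R$-module structure. The paper phrases the converse slightly more concretely by writing $S=A/N$ and showing $N$ is already an $R$-submodule of $A$, while you extend the action abstractly via local units, but these are the same argument in different clothing.
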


\begin{proof}
Let $R/M$ be a simple left $R$-module, where $M$ is a maximal left ideal of
$R$. Then it is easy to see that $R/M$ is isomorphic to a simple left module
over $R/A$ or $A$ according as (i) $M\supset A$, or (ii) $M\nsupseteq A$.

To prove the converse, we only consider the case of a simple left $A$-module
$S$, as the other case is trivial. Write $S=A/N$, where $N$ is a maximal left ideal of $A$.
For $n\in N$ there is an idempotent $e$ in $A$ such that $en=n$. It follows from this that 
$N$ is indeed a left $R$-module, and so $S=A/N$ is a simple $R$-module. 
\end{proof}

\begin{corollary}
\label{Semi-artinian}Suppose $L_{K}(E)$ is semi-artinian ring with finitely
many ideals. Then $L_{K}(E)$ has at most finitely many non-isomorphic simple
left modules.
\end{corollary}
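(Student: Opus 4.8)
The plan is to prove Corollary \ref{Semi-artinian} by combining Lemma \ref{Quotient simple} with transfinite induction along the Loewy (socular) series of $L = L_K(E)$. First I would recall that $L_K(E)$ is always von Neumann regular when it is semi-artinian — indeed, a semi-artinian Leavitt path algebra has the property that $E$ satisfies Condition (K), which forces every ideal to be graded and makes $L$ von Neumann regular; alternatively one can invoke that a semi-artinian ring with finitely many ideals has each factor $\mathrm{soc}(L/J)$ non-zero and of a controlled form. The key point is that von Neumann regularity is inherited by ideals and by quotients, so Lemma \ref{Quotient simple} applies at every stage.

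Next I would set up the induction. Let $0 = A_0 < A_1 < \cdots < A_n = L$ be the (finite!) chain of ideals; finiteness of the lattice of ideals is exactly the hypothesis, and since $L$ is semi-artinian each $A_{i+1}/A_i$ has non-zero socle, but more is true: in a semi-artinian ring with finitely many ideals the ideal lattice is finite, so the Loewy length is finite and each Loewy factor is a finite direct sum of simple modules — in fact each $A_{i+1}/A_i$, being a von Neumann regular ring that is semisimple (as a ring, since it has no proper ideals strictly between the chosen consecutive terms when we refine to a composition-type series of ideals), is a finite direct product of simple artinian rings, hence has only finitely many isomorphism classes of simple modules. Then, applying Lemma \ref{Quotient simple} repeatedly — every simple left $L$-module is isomorphic to a simple module over $L/A_{n-1}$ or over $A_{n-1}$, then every simple over $A_{n-1}$ is isomorphic to one over $A_{n-1}/A_{n-2}$ or $A_{n-2}$, and so on — I conclude that every simple left $L$-module is isomorphic to a simple module over one of the finitely many factors $A_{i+1}/A_i$. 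Since each factor contributes only finitely many isomorphism classes and there are finitely many factors, $L$ has at most finitely many non-isomorphic simple left modules.

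The main obstacle I anticipate is justifying that each Loewy (or ideal-chain) factor $A_{i+1}/A_i$ really does have only finitely many simple modules up to isomorphism. Having finitely many ideals globally does not automatically make each factor \emph{semisimple as a ring}; one needs that the factor is von Neumann regular (inherited from $L$) \emph{and} has finitely many ideals (which follows from the global finiteness of the ideal lattice, since ideals of $A_{i+1}/A_i$ correspond to ideals of $L$ between $A_i$ and $A_{i+1}$) \emph{and} is semi-artinian — then a semi-artinian von Neumann regular ring with finitely many ideals is a finite direct product of simple artinian rings, so it is semisimple and has finitely many simple modules. One should be careful that for non-unital rings (the $A_i$ are typically non-unital, being ideals of $L$) the notion of simple module and the correspondence of ideals behave well; this is where the idempotent-refinement argument from the proof of Lemma \ref{Quotient simple} (every element is absorbed by a local unit) is used again to transfer structural facts. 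A cleaner route, if available from the cited literature on Leavitt path algebras, is to observe directly that a semi-artinian $L_K(E)$ with finitely many ideals must come from a graph $E$ with finitely many hereditary saturated subsets and no cycles with exits, forcing each Loewy factor to be a finite direct sum of full matrix rings over $K$ or over $K[x,x^{-1}]$ — but even here one must rule out infinitely many such matrix-ring summands in a single factor, which again follows from finiteness of the ideal lattice since each summand is cut out by its own ideal.

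\begin{proof}
Since $L = L_K(E)$ is semi-artinian with finitely many ideals, its lattice of (two-sided) ideals is finite, so we may choose a finite chain
\[
0 = A_0 < A_1 < \cdots < A_n = L
\]
with no ideal strictly between consecutive terms. Each quotient $A_{i+1}/A_i$ is again semi-artinian, has no proper non-zero ideals coming from $L$, and — being (isomorphic to) an ideal-quotient of $L_K(E)$, hence itself a Leavitt path algebra of a suitable graph — is von Neumann regular; a semi-artinian von Neumann regular ring with finitely many ideals is a finite direct product of simple artinian rings, so $A_{i+1}/A_i$ is semisimple and has only finitely many isomorphism classes of simple left modules.

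Now argue by induction on $n$. If $n = 1$ the claim is immediate. If $n > 1$, put $A = A_{n-1}$, a non-zero proper ideal of $L$. By Lemma \ref{Quotient simple}, every simple left $L$-module is isomorphic to a simple left module over $L/A = L/A_{n-1} = A_n/A_{n-1}$ or over $A = A_{n-1}$. The first ring has finitely many simple modules by the previous paragraph; the ring $A_{n-1}$ is semi-artinian with finitely many ideals (its ideals being exactly the ideals of $L$ contained in $A_{n-1}$), and its chain $0 = A_0 < \cdots < A_{n-1}$ has length $n-1$, so by the induction hypothesis $A_{n-1}$ has finitely many non-isomorphic simple left modules. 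Hence $L$ has at most finitely many non-isomorphic simple left modules.
\end{proof}
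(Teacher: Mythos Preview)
Your overall architecture matches the paper's proof exactly: refine the finite ideal lattice to a maximal chain $0=A_0<A_1<\cdots<A_n=L$, observe each factor $A_{i+1}/A_i$ is a simple ring, use that $L$ is von Neumann regular (the paper cites \cite{ARS} for this), and then invoke Lemma~\ref{Quotient simple} iteratively to conclude that every simple $L$-module comes from one of the factors. So the strategy is right.

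There is, however, a genuine error in your justification of why each factor has only finitely many simple modules. You assert that ``a semi-artinian von Neumann regular ring with finitely many ideals is a finite direct product of simple artinian rings,'' and hence that each $A_{i+1}/A_i$ is simple \emph{artinian}. This is false: take $M_{\infty}(K)$, the ring of $\mathbb{N}\times\mathbb{N}$ matrices over $K$ with finitely many nonzero entries. It is simple, von Neumann regular, semi-artinian (it is the direct sum of its columns, each a simple left ideal), yet it is not artinian. In the Leavitt-path-algebra setting such infinite matrix rings actually arise as the factors, so the gap is not hypothetical. The paper's argument is the correct replacement: since $A_{i+1}/A_i$ is simple and semi-artinian, it equals its own socle, and a simple ring whose socle is everything must be a \emph{homogeneous} semisimple module, i.e.\ a direct sum of mutually isomorphic simple left ideals; hence there is exactly one isomorphism class of simple modules for each factor. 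A second, smaller issue: your induction hypothesis is the Corollary itself, stated for $L_K(E)$, but you apply it to the ideal $A_{n-1}$, which is not a priori of that form. Either note (as the paper does elsewhere, citing \cite{RT}) that graded ideals of Leavitt path algebras are again Leavitt path algebras, or---cleaner---drop the induction and simply iterate Lemma~\ref{Quotient simple} down the chain directly, as the paper does.
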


\begin{proof}
Suppose the semi-artinian ring $L=L_{K}(E)$ has only finitely many two-sided
ideals. Then we can build a finite ascending chain of two-sided ideals
$0<S_{1}<\cdot\cdot\cdot<S_{n}=L$ where, for each $i=1,\cdot\cdot\cdot,n-1$,
$S_{i+1}/S_{i}$ is a simple ring which, being also semi-artinian, is a direct
sum of isomorphic simple left ideals. Consequently, each $S_{i+1}/S_{i}$ has a
single isomorphism class of simple left $S_{i+1}/S_{i}$-modules. Also by
\cite{ARS}, $L_{K}(E)$ is von Neumann regular. We then conclude, by Lemma
\ref{Quotient simple}, that $L$ has exactly $n$ distinct isomorphism classes
of simple left $L$-modules.
\end{proof}

\begin{theorem}
\label{FIRT}Let $E$ be an arbitrary graph and $K$ be any field. Then the
following are equivalent for the Leavitt path algebra $L=L_{K}(E)$:

(i) \ $L$ has at most finitely many non-isomorphic simple left/right $L$-modules;

(ii) $L$ is a semi-artinian ring with finitely many two-sided ideals.
\end{theorem}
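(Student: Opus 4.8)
The plan is to prove Theorem \ref{FIRT} by showing both implications, using the structural results already developed. The direction (ii) $\Rightarrow$ (i) is essentially Corollary \ref{Semi-artinian}, so the real content is (i) $\Rightarrow$ (ii).

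First I would handle (i) $\Rightarrow$ (ii). Assume $L=L_K(E)$ has only finitely many non-isomorphic simple left modules. Since finite implies countable, $L$ is in particular CIRT, so Theorem \ref{CIRT} applies: $L$ is the union of a smooth ascending chain of graded ideals $0 < I_1 < \cdots < I_\alpha < \cdots$ ($\alpha < \tau$) with each $I_{\alpha+1}/I_\alpha$ a direct sum of at most countably many matrix rings $M_\Lambda(R)$ with $R = K$ or $R = K[x,x^{-1}]$. The key observations are: (a) the chain must have \emph{finite} length, i.e.\ $\tau$ is finite; and (b) each quotient $I_{\alpha+1}/I_\alpha$ must be a \emph{finite} direct sum of matrix rings over $K$ only (no $K[x,x^{-1}]$ factors, and no infinite direct sums). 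For (a), note that as shown in the proof of Theorem \ref{CIRT}, every simple $I_{\alpha+1}/I_\alpha$-module is (isomorphic to) a simple $L$-module, and nonzero quotients of consecutive terms contribute; if $\tau$ were infinite we would get infinitely many pairwise non-isomorphic simple modules (simple modules annihilated by $I_\alpha$ but not by $I_{\alpha+1}$ are genuinely different for different $\alpha$). For (b): a single matrix ring $M_\Lambda(K)$ over an infinite index set $\Lambda$ is semisimple with one isomorphism class of simples, which is fine, but a direct sum of infinitely many such blocks gives infinitely many non-isomorphic simples; and $K[x,x^{-1}]$ has infinitely many maximal ideals (even when $K$ is finite, $K[x,x^{-1}]$ is a PID with infinitely many irreducible polynomials), hence $M_\Lambda(K[x,x^{-1}])$ has infinitely many non-isomorphic simple modules. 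So finiteness of the simple-module count forces each $I_{\alpha+1}/I_\alpha$ to be a finite direct sum of (arbitrarily-sized) matrix rings over $K$. Each such quotient is then von Neumann regular and semisimple as a module over itself, hence semi-artinian with finitely many ideals. Then I would lift this to $L$: a finite chain whose consecutive quotients are semisimple rings shows $L$ is semi-artinian (the Loewy length is at most $\tau$), and since each quotient $I_{\alpha+1}/I_\alpha$ is a finite direct product of simple rings, the ideals of $L$ are built from finitely many "layers" each contributing finitely many ideals, so $L$ has finitely many two-sided ideals. (Here one uses that the $I_\alpha$ are graded ideals and the lattice of graded ideals of a Leavitt path algebra behaves well; in the acyclic/semisimple-layer situation all ideals in each layer are graded.) Finally, that $L$ is von Neumann regular follows from the graph being acyclic, by \cite{ARS} — indeed the presence of any cycle would, via Chen's construction or the remark after Theorem \ref{CIRT}, produce a $K[x,x^{-1}]$-factor and hence infinitely many simples, so $E$ is acyclic.

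For (ii) $\Rightarrow$ (i), I would simply invoke Corollary \ref{Semi-artinian}, which states exactly that a semi-artinian $L_K(E)$ with finitely many ideals has at most finitely many non-isomorphic simple left modules; the right-module statement follows by the left-right symmetry of the hypotheses (a semi-artinian ring with finitely many ideals — these conditions are left-right symmetric, or one applies the same argument to $L_K(E^{\mathrm{op}}) \cong L_K(\bar E)$ where $\bar E$ is the graph with reversed edges).

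The main obstacle I anticipate is step (b) above combined with the bookkeeping in lifting "finitely many ideals" from the layers $I_{\alpha+1}/I_\alpha$ to $L$ itself. Ruling out $K[x,x^{-1}]$ factors requires knowing $K[x,x^{-1}]$ always has infinitely many simple modules regardless of whether $K$ is finite — this is true since $K[x,x^{-1}]$ is an infinite PID, but should be stated carefully. And counting ideals of $L$ from a finite filtration with semisimple quotients is not completely automatic for a general ring; here it works because in the acyclic case every two-sided ideal of $L_K(E)$ is graded (graded ideals correspond to admissible pairs, and acyclicity removes the non-graded ones), so the ideal lattice injects into a finite product of the (finite) ideal lattices of the layers. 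That graded-ideal structure is the technical crux, but it is available from the cited results of \cite{T}.
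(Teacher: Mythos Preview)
Your argument is correct, and for (ii)$\Rightarrow$(i) it coincides with the paper's (both simply cite Corollary~\ref{Semi-artinian}). For (i)$\Rightarrow$(ii), however, you take a genuinely different route.

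The paper does not invoke Theorem~\ref{CIRT} at all. Instead it argues directly via primitive ideals: finitely many simple modules means finitely many primitive ideals; the description of non-graded primes in \cite{R-1} then forces every prime to be graded, hence $E$ satisfies Condition~(K) and \emph{every} ideal is graded. Since each $L_K(E)/I$ is again a Leavitt path algebra with zero Jacobson radical, every ideal is an intersection of primitive ideals, so there are only finitely many ideals in total. Acyclicity of $E$ is then deduced from Condition~(K) together with Proposition~\ref{disjoint cycles}, and the socle chain is built last. So the paper gets ``finitely many ideals'' first and ``semi-artinian'' second; you do the opposite, feeding (i) into Theorem~\ref{CIRT}, forcing the chain to be finite with semisimple layers (no $K[x,x^{-1}]$ since that ring has infinitely many maximal ideals, no infinite direct sums), and reading off semi-artinianity immediately.

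Your approach is economical in that it reuses the CIRT structure theorem, but the step you flag as the obstacle really does need one more sentence of justification. Saying ``all ideals are graded, so the ideal lattice injects into a finite product of the layer lattices'' skips the reason that injection exists. The cleanest fix within your framework: $E$ acyclic gives $L$ von~Neumann regular (cite \cite{AR1}, not \cite{ARS}), so the ideal lattice is distributive; then for the finite chain $0=I_0<\cdots<I_n=L$ the map $J\mapsto\bigl((J\cap I_{k+1}+I_k)/I_k\bigr)_{k}$ into $\prod_k \mathrm{Id}(I_{k+1}/I_k)$ is injective by a short induction using distributivity, and each factor is a finite lattice. Alternatively you can borrow the paper's shortcut: finitely many simples $\Rightarrow$ finitely many primitive ideals, and since every graded quotient $L/I$ is again a Leavitt path algebra with zero Jacobson radical, every (graded, hence every) ideal is an intersection of primitives. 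Either way the gap closes easily; just make the mechanism explicit.
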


\begin{proof}
Assume (i). Since there are only finitely many non-isomorphic simple left
modules, $L_{K}(E)$ has only finitely many distinct primitive ideals. We wish
to show that there are no non-graded prime ideals in $L_{K}(E)$. Now any
non-graded prime ideal of $L_{K}(E)$ is primitive (see Theorem 4.3,
\cite{R-1}). From Theorem 3.12 of \cite{R-1}, the non-graded prime ideals of
$L_{K}(E)$ are precisely the ideals $P$ generated by $I_{(H,S)}\cup\{f(c)\}$, where 
$P\cap E^{0}=H$, $S=\{v\in B_{H}:v^{H}\in P\}$, $c$ is a
cycle without exits in $E\backslash H$, and $f$ \ is an irreducible polynomial
in $K[x,x^{-1}]$. So, if $L_{K}(E)$ has one such non-graded prime ideal $P$,
then corresponding to each of the infinitely many irreducible polynomials $g$
in $K[x,x^{-1}]$, $L_{K}(E)$ will have a non-graded prime (and hence a
primitive) ideal $P_{g}$ generated by $I_{(H,S)}\cup\{g(c)\}$. This
contradicts that $L_{K}(E)$ has only a finite number of primitive ideals. Thus
every prime ideal of $L_{K}(E)$ must be graded and we appeal to Corollary 3.13
of \cite{R-1} to conclude that the graph $E$ satisfies Condition (K). From
Theorem 6.16 of \cite{T}, we then conclude that every ideal of $L_{K}(E)$ is graded.

Thus if $I$ is an ideal of $L_{K}(E)$ then, being a graded ideal,
$I=I_{(H,S)}$, where $H=I\cap E^{0}$ and $S=\{v\in B_{H}:v^{H}\in I\}$ and,
moreover, $L_{K}(E)/I\cong L_{K}(E\backslash(H,S))$ by \cite{T}. As the
Jacobson radical of the Leavitt path algebra $L_{K}(E\backslash(H,S))$ is zero
(see \cite{ARS}), we conclude that $I$ is the intersection of all the
primitive ideals containing $I$. As there are only finitely many primitive
ideals in $L_{K}(E)$, we then conclude that $L_{K}(E)$ contains only finitely
many distinct ideals which are all of the form $I_{(H,S)}$ for some admissible
pair $(H,S)$.

We also claim that $E$ contains no cycles. Because if there is a cycle $g$ in
$E$, then by Condition (K) there will be another cycle $h\neq g$ sharing a
common vertex with $g$ and this contradicts Proposition \ref{disjoint cycles}.
Hence $E$ is acyclic and $L$ is von Neumann regular, by \cite{AR1}. Also, by
Proposition \ref{CIRT implies line points or NE cycles}, $E$ contains line
points and so $S_{1}=Soc(L)\neq0$. Also, since there are only finitely many
ideals, $S_{1}$ is a direct sum of finitely many homogeneous components:
$S_{1}=%
{\textstyle\bigoplus\limits_{i=1}^{n_{1}}}
$ $S_{1i}$ with $S_{1i}$ a direct sum of isomorphic simple left ideals of $L$.
Let $H=S_{1}\cap E^{0}$. Now $L/S_{1}\cong L_{K}(E\backslash(H,\emptyset ))$
satisfies the same hypothesis as $L$ and hence has a non-zero socle
$S_{2}/S_{1}$. Proceeding like this and using the fact that $L$ has at most
finitely many ideals, we conclude $L$ is the union of a finite ascending chain
of ideals $\{0\}=S_{0}\subset S_{1}\subset...\subset S_{m}=L$ where, for each
$i$, $S_{i+1}/S_{i}=Soc(L/S_{i})$ and is a direct sum of finitely many
homogeneous components. This proves (ii).

(ii)$\Longrightarrow$(i). This follows from Corollary \ref{Semi-artinian}.
\end{proof}

\begin{remark}
One might wonder what are the finite lattices appearing as ideal lattices of
the algebras $L_{K}(E)$ characterized in Theorem \ref{FIRT}. Indeed, observe
that the lattice of ideals of a von Neumann regular ring is a distributive
lattice. It follows from Bergman's Theorem \cite{Bergman} that every finite
distributive lattice can be represented as the lattice of ideals of a unital
ultramatricial algebra. On the other hand, \cite[Proposition 2.12]{Raeburn}
asserts that this unital ultramatricial algebra is Morita-equivalent to a
Leavitt path algebra $L_{K}(E)$ of an acyclic graph. For this graph $E$, the
conditions in Theorem \ref{FIRT}(ii) necessarily hold. In conclusion, we see
that the class of lattices appearing as ideal lattices of the algebras
described in Theorem \ref{FIRT} is exactly the class of all finite
distributive lattices.
\end{remark}

\section{Equivalent Graphical Conditions}

In this section we describe the graphical properties of $E$ under which the
Leavitt path algebra $\ L_{K}(E)$ is of finite irreducible representation
type. We begin with a simple lemma describing when two line points generate
isomorphic simple right ideals.

\begin{lemma}
\label{Line point} Given two line points $u,v$, $uL_{K}(E)\cong vL_{K}(E)$ if
and only if $T(u)\cap T(v)$ is not empty.
\end{lemma}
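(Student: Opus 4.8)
The plan is to characterize the isomorphism $uL \cong vL$ of simple right ideals in terms of a common element in the line segments issuing from $u$ and $v$. First I would recall that, since $u$ is a line point, $T(u)$ consists of the vertices lying on a single (finite or infinite) line segment $\mu_u$ starting at $u$, and every path with source $u$ is an initial subsegment of $\mu_u$; likewise for $v$ with a line segment $\mu_v$. For the forward direction, suppose $T(u)\cap T(v)=\emptyset$. Then $u$ and $v$ lie in disjoint hereditary sets $T(u)$ and $T(v)$, and one checks $u L_K(E) v = 0$: any monomial $\alpha\beta^*$ with $s(\alpha)=u$, $s(\beta)=v$ would force $r(\alpha)=r(\beta)$ to be a vertex in $T(u)\cap T(v)$, which is empty, so $uLv=0$. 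Since $\operatorname{Hom}_{L}(uL, vL)\cong vLu = 0$ (using that $vL$ is generated by the idempotent $v$), the two simple modules cannot be isomorphic.

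For the converse, suppose $w\in T(u)\cap T(v)$. Then there are paths $\alpha$ with $s(\alpha)=u$, $r(\alpha)=w$ and $\beta$ with $s(\beta)=v$, $r(\beta)=w$. Because $u$ and $v$ are line points, there are no bifurcations along $\alpha$ or $\beta$, so using the CK relations one has $\alpha^*\alpha = r(\alpha) = w = r(\beta) = \beta^*\beta$; moreover $\alpha\alpha^* = u$ and $\beta\beta^* = v$, since each intermediate vertex emits exactly the one edge of the line segment, so the CK-2 relation at a regular intermediate vertex $x$ reads $x = e e^*$ for the unique edge $e$ leaving $x$, and telescoping gives $\alpha\alpha^* = u$ (and if the segment from $u$ down to $w$ passes only through regular vertices, which it does since $w$ is reached from $u$). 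Then right multiplication by $\alpha^*\beta$ gives an $L$-module map $uL \to vL$ sending $u\mapsto \alpha^*\beta$ (note $\alpha^*\beta \in vL$ because $(\alpha^*\beta) v = \alpha^*\beta$ and $v(\alpha^*\beta)=\beta$... rather one checks $\alpha^*\beta = \alpha^* v \beta$ lies in $vL$ after observing $s(\alpha^*)=w = r(\alpha)$ so actually the element lies in $wLv$; more cleanly, use that the map $uL\to wL$, $x\mapsto \alpha^* x$ and $wL\to vL$, $y\mapsto \beta y$... ) — the clean statement is that $\alpha^*$ implements an isomorphism $uL \xrightarrow{\ \sim\ } wL$ with inverse multiplication by $\alpha$, since $\alpha\alpha^* = u$ and $\alpha^*\alpha = w$, and similarly $\beta^*$ implements $vL \cong wL$; composing gives $uL\cong vL$.

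The main obstacle, and the point requiring care, is verifying the idempotent identities $\alpha\alpha^* = u$ and $\beta\beta^* = v$: this needs the line-point hypothesis in an essential way, since it guarantees that every vertex strictly between $u$ and $w$ on $\mu_u$ is a regular vertex emitting exactly one edge, so that the CK-2 relations collapse the product $\alpha\alpha^*$ down to $u$ one edge at a time. One should also handle the degenerate cases where $\alpha$ or $\beta$ is a trivial path (i.e. $w=u$ or $w=v$), where the identities are immediate. Everything else is a routine computation with the defining relations of $L_K(E)$.
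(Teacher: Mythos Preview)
Your argument is correct and follows essentially the same route as the paper: the forward direction in the paper observes that an isomorphism $uL\to vL$ is left multiplication by some nonzero element of $vLu$, whose monomial expansion forces a vertex in $T(u)\cap T(v)$ --- which is exactly your contrapositive computation that $vLu=0$ when $T(u)\cap T(v)=\emptyset$; and for the converse both you and the paper factor through $wL$ via left multiplication by the unique paths $\alpha,\beta$ and their ghosts. Your explicit verification that $\alpha\alpha^*=u$ (using that every vertex on the line segment from $u$ to $w$ is regular and emits a single edge, so CK-2 telescopes) is a point the paper leaves implicit, and is indeed the crux. One small slip to clean up: you momentarily write ``right multiplication by $\alpha^*\beta$'', but for right $L$-modules the maps $uL\to vL$ are given by \emph{left} multiplication by elements of $vLu$, as you correctly use in your final ``clean statement''.
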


\begin{proof}
Suppose $\theta:u$ $L_{K}(E)\rightarrow v$ $L_{K}(E)$ is an isomorphism. Then
$\theta$ is given by the left multiplication by the non-zero element
$\theta(u)=vsu$ for some $s\in L_{K}(E)$. We can clearly assume that $s=vsu$.
Write $s=%
{\displaystyle\sum\limits_{i=1}^{m}}
k_{i}\alpha_{i}\beta_{i}^{\ast}$ where $k_{i}\in K$ and $\alpha_{i},\beta_{i}$
are finite paths in $E$. If a term $k_{i}\alpha_{i}\beta_{i}^{\ast}%
=k_{i}v\alpha_{i}\beta_{i}^{\ast}u\neq0$, then $v=s(\alpha_{i})$,
$u=s(\beta_{i})$ and $r(\alpha_{i})=r(\beta_{i})=w$, so $w\in T(u)\cap T(v)$.
Conversely, suppose $w\in T(u)\cap T(v)$. Since $u$ is a line point, so is $w$
and there is a unique path $\mu$ from $u$ to $w$. Then $ua\longmapsto
w\mu^{\ast}ua$ is an isomorphism from the simple module $uL_{K}(E)$ to
$wL_{K}(E)$ with the map $wb\longmapsto u\mu wb$ being the inverse
isomorphism. By a similar argument, $vL_{K}(E)\cong wL_{K}(E)$. Consequently,
$uL_{K}(E)\cong vL_{K}(E)$.
\end{proof}

The next Theorem describes the graphical conditions on $E$ under which
$L_{K}(E)$ is of \ finite irreducible representation type.

\begin{theorem}
\label{Graph Conditions}Let $E$ be an arbitrary graph and let $K$ be any
field. Then the Leavitt path algebra $L_{K}(E)$ is of finite irreducible
representation type if and only if all of the following conditions hold:

(i) $\ \ E$ is acyclic;

(ii) $\ E^{0}$ has only finitely many distinct hereditary saturated \ subsets
$H$ and for each such $H$ the corresponding set $B_{H}$ of breaking vertices
is finite; and

(iii) In the poset of admissible pairs in $E$, $(E^{0},\emptyset)$ is the supremum
of a finite ascending chain
\begin{align*}
(H_{0}  &  =\emptyset,\emptyset)<(H_{1},\emptyset)<(H_{1},B_{H_{1}})<(H_{2},B_{H_{1}}\cap
B_{H_{2}})<(H_{2},B_{H_{2}})<\\
(H_{3},B_{H_{2}}\cap B_{H_{3}})  &  <(H_{3},B_{H_{3}})<\cdot\cdot\cdot
\end{align*}
where, for each $j\geq0$, $H_{j+1}$ is a
hereditary saturated subset of $E^{0}$ and $H_{j+1}\backslash H_{j}$ is the
hereditary saturated closure of the set of line points in $E\backslash
(H_{j},B_{H_{j}})$.
\end{theorem}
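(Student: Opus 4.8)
The strategy is to run Theorem \ref{FIRT} as a bridge: $L_K(E)$ is of finite irreducible representation type if and only if it is a semi-artinian ring with finitely many two-sided ideals, and I will translate each half of that ring-theoretic condition into the graphical language of (i)--(iii). For the forward direction, assume $L_K(E)$ has finitely many non-isomorphic simple modules. Condition (i) is immediate from the argument already given inside the proof of Theorem \ref{FIRT}: Condition (K) must hold (no non-graded prime ideals), and Condition (K) together with Proposition \ref{disjoint cycles} forces $E$ to be acyclic. For (ii), I use that every ideal of $L_K(E)$ is of the form $I_{(H,S)}$ and that there are only finitely many ideals (again from the proof of Theorem \ref{FIRT}); since distinct hereditary saturated sets $H$ give distinct ideals $I_{(H,\emptyset)}$, there can be only finitely many such $H$, and for a fixed $H$ the admissible pairs $(H,S)$ with $S\subseteq B_H$ inject into the subsets of $B_H$, so finitely many ideals forces each $B_H$ to be finite. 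For (iii), I invoke the finite socular chain $\{0\}=S_0\subset S_1\subset\cdots\subset S_m=L$ produced in the proof of Theorem \ref{FIRT}, where $S_{i+1}/S_i=\mathrm{Soc}(L/S_i)$; writing $S_i=I_{(H_i',S_i')}$ and using that the socle of $L_K(E\setminus(H_i',S_i'))$ is the ideal generated by its line points (cited fact from \cite{AMMS}), I read off that $H_{j+1}\setminus H_j$ is the hereditary saturated closure of the line points in the quotient graph, and I interleave the breaking-vertex pairs $(H_{j+1},B_{H_j}\cap B_{H_{j+1}})<(H_{j+1},B_{H_{j+1}})$ exactly as in the displayed chain. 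The finiteness of this chain of admissible pairs is what (iii) records.

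For the converse, assume (i)--(iii). From (i) and \cite{AR1} (acyclic $\Rightarrow$ von Neumann regular, as used in Theorem \ref{FIRT}), $L_K(E)$ is von Neumann regular. I then argue that the ascending chain of admissible pairs in (iii) yields, via $I_{(H,S)}\mapsto L_K(E\setminus(H,S))$ and the structure theory of \cite{T}, a finite chain of graded ideals $0=J_0<J_1<\cdots$ whose successive quotients are each a socle of a Leavitt path algebra of an acyclic graph, hence a direct sum of simple modules; this exhibits $L_K(E)$ as semi-artinian with a finite Loewy-type chain. Condition (ii) guarantees that $L_K(E)$ has only finitely many graded ideals, and since $E$ is acyclic it satisfies Condition (K), so by Theorem 6.16 of \cite{T} every ideal is graded; hence $L_K(E)$ has finitely many two-sided ideals. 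Theorem \ref{FIRT} then gives that $L_K(E)$ is of finite irreducible representation type.

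The main obstacle I anticipate is the bookkeeping in part (iii): verifying that the specific interleaved chain displayed in the statement — alternating between enlarging $H$ to the saturated closure of the line points in the quotient and then enlarging $S$ from $B_{H_j}\cap B_{H_{j+1}}$ up to all of $B_{H_{j+1}}$ — is exactly the refinement of the socular chain $S_0\subset S_1\subset\cdots$ into admissible pairs, and that consecutive steps are strict. This requires matching the operation ``take the socle of the quotient'' with ``adjoin the saturated closure of the line points'' at the level of admissible pairs, and checking that the breaking-vertex adjustments account precisely for the difference between the ideal $I_{(H_{j+1},B_{H_j}\cap B_{H_{j+1}})}$ one first reaches and the ideal $I_{(H_{j+1},B_{H_{j+1}})}$ that equals the next socle layer $S_{j+1}$. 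The rest of the argument is a routine assembly of results already available: Theorem \ref{FIRT}, the ideal description of \cite{T}, Condition (K) via Proposition \ref{disjoint cycles}, and the socle characterization from \cite{AMMS}.
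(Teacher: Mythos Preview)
Your proposal is correct and follows essentially the same route as the paper: use Theorem \ref{FIRT} as the bridge, read off (i) and (ii) from the finiteness of the (graded) ideal lattice, and for (iii) refine the finite socular chain into the interleaved chain of admissible pairs, with the converse assembled from \cite{AR1}, \cite{T}, and Theorem \ref{FIRT} exactly as you describe. One small correction to your anticipated bookkeeping: the socle layer $S_{j+1}$ is $I_{(H_{j+1},\,B_{H_j}\cap B_{H_{j+1}})}$, not $I_{(H_{j+1},B_{H_{j+1}})}$ (the latter is the intermediate step), and the identity you will need to justify this rewriting is $B_{H_j}\setminus B_{H_{j+1}}\subseteq H_{j+1}$, which the paper checks explicitly.
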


\begin{proof}
If $L_{K}(E)$ has finite irreducible representation type, then in the proof of
Theorem \ref{FIRT} it was shown that $E$ is acyclic and that $L_{K}(E)$ has at
most finitely many two-sided ideals (which are all graded). The latter
property is equivalent to Condition (ii) by (\cite{T}, Theorem 5.7). To prove
condition (iii), we shall use the fact, established in Theorem \ref{FIRT},
that $L_{K}(E)$ (and each of its homomorphic images) is semi-artinian. We wish
to construct a chain of ideals of the form $I_{(H,S)}$. Let $J_{1}%
=Soc(L_{K}(E))$. By \cite{AMMS}, $J_{1}$ is the ideal generated by the
hereditary saturated closure $H_{1}$ of the set $T_{1}$ of all the line points
in $E$. Now $J_{1}=I_{(H_{1},\emptyset)}$ and,
by \cite{T}, $J_{1}$ is the kernel of an epimorphism $f:L_{K}%
(E)\longrightarrow L_{K}(E\backslash(H_{1},\emptyset))$ where $(E\backslash
(H_{1},\emptyset))^{0}=E^{0}\backslash H_{1}\cup\{v^{\prime}:v\in B_{H_{1}}\}$ and
that $f(v^{H_{1}})=v^{\prime}$ for all $v\in B_{H_{1}}$. Let $J_{2}%
=I_{(H_{1},B_{H_{1}})}=<H_{1},\{v^{H_{1}}:v\in B_{H_{1}}\}>$. so that
$J_{2}/J_{1}\cong<\{v^{\prime}:v\in B_{H_{1}}\}>\subset L_{K}(E\backslash
(H_{1},\emptyset))$. By \cite{T}, $L_{K}(E)/J_{2}\cong L_{K}(E\backslash
(H_{1},B_{H_{1}}))\cong L_{K}(E\backslash H_{1})$ since $(E\backslash
(H_{1},B_{H_{1}}))^{0}=E^{0}\backslash H_{1}$ and $(E\backslash(H_{1}%
,B_{H_{1}}))^{1}=\{e\in E^{1}:r(e)\notin H_{1}\}$. Now $L_{K}(E)/J_{2}$ has a
non-zero socle (being semi-artinian) and let $H_{2}$ be a hereditary saturated
subset of $E^{0}$ containing $H_{1}$ such that $H_{2}\backslash H_{1}$ is the
hereditary saturated closure of the set $T_{2}$ of all the line points in
$E\backslash(H_{1},B_{H_{1}})$. Define
\[
J_{3}=<J_{2,}H_{2}>=<H_{2},\{v^{H_{1}}:v\in B_{H_{1}}\}>
\]
so that
\[
J_{3}/J_{2}\cong<H_{2}\backslash H_{1}>=Soc(E\backslash(H_{1},B_{H_{1}%
}))\subset L_{K}(E\backslash(H_{1},B_{H_{1}}))\cong L_{K}(E\backslash H_{1}).
\]
If $v\in B_{H_{1}}\backslash B_{H_{2}}$, then $r(s^{-1}(v))\subset
H_{2}\subset J_{3}$ and since $v^{H_{1}}\in J_{3}$, we conclude that $v\in
J_{3}$. In the isomorphism $L_{K}(E)/J_{2}\cong L_{K}(E\backslash H_{1}))$,
$v$ gets mapped to an element in $(J_{3}/J_{2})\cap(E^{0}\backslash
H_{1})=H_{2}\backslash H_{1}$ and so $v\in H_{2}$. Thus $B_{H_{1}}\backslash
B_{H_{2}}\subset H_{2}$ and so $J_{3}=<H_{2},\{v^{H_{2}}:v\in B_{H_{1}}\cap
B_{H_{2}}\}>=I_{(H_{2},B_{H_{1}}\cap B_{H_{2}})}$. Note that, in the
isomorphism $L_{K}(E)/J_{1}\cong L_{K}(E\backslash(H_{1},\emptyset))$, $J_{3}%
/J_{1}$ maps to $Soc(L_{K}(E\backslash(H_{1},\emptyset)))$. Define $J_{4}%
=I_{(H_{2},B_{H_{2}})}$. Proceeding like this, we obtain a chain of ideals
\begin{align*}
\{0\}  &  =I_{(H_{0}=\emptyset,\emptyset)}\subset I_{(H_{1},\emptyset)}\subset I_{(H_{1}%
,B_{H_{1}})}\subset I_{(H_{2},B_{H_{1}}\cap B_{H_{2}})}\subset\\
I_{(H_{2},B_{H_{2}})}  &  \subset I_{(H_{3},B_{H_{2}}\cap B_{H_{3}})}\subset
I_{(H_{3},B_{H_{3}})}\subset\cdot\cdot\cdot\qquad\qquad\qquad\qquad(\ast)
\end{align*}
whose union is $L_{K}(E)$. Here, for each $j\geq0$, $H_{j+1}$ is a hereditary
saturated subset of $E^{0}$ and $H_{j+1}\backslash H_{j}$ is the hereditary
saturated closure of the set $T_{j+1}$ of all the line points in
$E\backslash(H_{j},B_{H_{j}})$. Note that, for each $j\geq0$, there are only
finitely many equivalence classes of line points in $E\backslash
(H_{j},B_{H_{j}})$, due to Condition (ii) . \ Our construction shows that, in
the poset of admissible pairs, $(E^{0},\emptyset)$ is then the supremum of a finite
ascending chain
\begin{align*}
(\emptyset,\emptyset)  &  <(H_{1},\emptyset)<(H_{1},B_{H_{1}})<(H_{2},B_{H_{1}}\cap B_{H_{2}%
})<\\
(H_{2},B_{H_{2}})  &  <(H_{3},B_{H_{2}}\cap B_{H_{3}})<(H_{3},B_{H_{3}}%
)<\cdot\cdot\cdot\qquad\qquad\qquad\qquad(\ast\ast)
\end{align*}
where the sets $H_{j}$ are as described above.

Conversely, Condition (i) implies, by \cite{AR1}, that $L_{K}(E)$ is von
Neumann regular and Condition (ii) implies \ that it has only finitely many
two-sided ideals (which are all graded ideals). Consider chain $(\ast\ast)$
indicated in Condition (iii) and the corresponding chain of ideals $(\ast)$
constructed above. Denote $I_{(H_{1},\emptyset)}$ by $S_{1}$ and, for each $i\geq
2$, denote $I_{(H_{i},B_{H_{i-1}}\cap B_{H_{i}})}$ by $S_{i}$. Then we get the
ascending finite chain
\[
\{0\}\subset S_{1}\subset\cdot\cdot\cdot\subset S_{t}=L_{K}(E)
\]
where $S_{1}=Soc(L_{K}(E))$ and for each $i$, $S_{i+1}/S_{i}=Soc(L_{K(E)}%
/S_{i})$. Hence $L_{K}(E)$ is a semi-artinian ring. By Theorem
\ref{FIRT}, $L_{K}(E)$ is of finite irreducible representation type.
\end{proof}

\begin{example}
To illustrate Theorem \ref{Graph Conditions}, consider the graph
$P_{3}$ in the next section: It is clearly acyclic. Its hereditary saturated
subsets of vertices are: $H_{0}=$ $\emptyset$, the empty set; $H_{1}=\{v_{11}%
,v_{12},v_{13},\cdot\cdot\cdot\}$; $H_{2}=H_{1}\cup\{v_{21},v_{22}%
,v_{23},\cdot\cdot\cdot\}$; $H_{3}=(P_{3})^{0}$ . Since the graph is row-finite there are no breaking vertices.
Then the chain in condition (iii) is given by
\[
(\emptyset,\emptyset)<(H_{1},\emptyset)< (H_{2},\emptyset)< (H_{3}=(P_{3})^{0},\emptyset).
\] 
 \end{example}

\medskip

We point out that one can build examples showing that no two of the three conditions in
Theorem \ref{Graph Conditions} imply the third.

\section{Examples}

In this section, we wish to illustrate Theorems  \ref{CIRT} and \ref{FIRT} by constructing
non-trivial examples of $L_{K}(E)$ which, being semi-artinian (von Neumann
regular) rings, are the union of a finite or infinite  socular
chain
\[
0<S_{1}< S_2 \cdots <S_{n} < \cdots 
\]
where $S_{1}=Soc(L_{K}(E))$ and, for each $j<n$, $S_{j+1}/S_{j}=Soc(L_{K}%
(E)/S_{j})$. To have a really simple example, one can assume that $S_{1}$ and
each $S_{j+1}/S_{j}$ are direct sums of isomorphic simple modules. Since the socle of a Leavitt path algebra is generated by line
points in the graph, the assumed property on $S_{1}$ and $S_{j+1}/S_{j}$ imply
that the line points belonging to $S_{1}$ (and likewise in $S_{j+1}/S_{j}$)
all must form a single straight line segment.

It is this idea that was used to construct the "Pyramid" examples in reference
\cite{ARS}. In some sense these examples are the simplest  
examples of semi-artinian Leavitt path algebras of infinite graphs with arbitrary Loewy length.

We will show that, for each (finite or infinite) cardinal $\kappa$,
the Leavitt path algebra $L_{K}(P_{\kappa})$ of the Pyramid graph $P_{\kappa}$ has exactly $\kappa$ distinct
isomorphism classes of simple right $L_{K}(E)$-modules.

Let $P_{1}$ be the graph 
$$\xymatrix{  {\bullet}^{v_{1,1}} \ar[r]  & {\bullet}^{v_{1,2}} \ar[r]
& {\bullet}^{v_{1,3}} \ar[r]  &
               {\bullet}^{v_{1,4}} \ar@{.>}[r] &  \\
            }$$
consisting of a single infinite path. Now all the vertices $v_{1,i}$ are line
points in $P_{1}$ and so $v_{1,i}L_{K}(P_{1})$ is a simple module for all $i$
(see \cite{AMMS}). Also $L_{K}(P_{1})=\oplus_{i}v_{1,i}L_{K}(P_{1}%
)=Soc(L_{K}(P_{1}))$. From Lemma \ref{Line point} it is clear that, for all
$i<j$, $v_{1,i}L_{K}(P_{1})\cong v_{1,j}L_{K}(P_{1})$. Since $L_{K}(P_{1})$ is a
direct sum of simple modules, every simple right $L_{K}(P_{1})$-module is
isomorphic to the simple right ideal $v_{1,i}L_{K}(P_{1})$ and we conclude that
all the simple right $L_{K}(P_{1})$-modules are isomorphic.

Let $P_{2}$ be the graph
$$\xymatrix{{\bullet}^{v_{1,1}} \ar[r]  & {\bullet}^{v_{1,2}} \ar[r]
& {\bullet}^{v_{1,3}} \ar[r]  &
               {\bullet}^{v_{1,4}} \ar@{.>}[r] &  \\
              {\bullet}^{v_{2,1}} \ar[r] \ar[u] & {\bullet}^{v_{2,2}}
\ar[r] \ar[ul] & {\bullet}^{v_{2,3}} \ar[r] \ar[ull] &
{\bullet}^{v_{2,4}}\ar@{.>}[r] \ar[ulll] &  \\
               }$$

Now the line points in the graph $P_{2}$ are
the vertices in the first row, namely, $v_{1,1},v_{1,2},v_{1,3},\cdot\cdot\cdot$
\ and they generate the socle $S$ of $P_{2}$ which, from the explanation in
describing $L_{K}(P_{1})$ above, is a direct sum of isomorphic faithful simple
right $L_{K}(P_{2})$-modules. Also, by \cite{T}, $P_{2}/S\cong L_{K}(F)$ where
$F\cong P_{1}$ and so $P_{2}/S=Soc(P_{2}/S)$ is a direct sum of isomorphic
simple modules annihilated by the ideal $S$. Thus $L_{K}(P_{2})$ has exactly
two distinct isomorphism classes of simple $L_{K}(P_{2})$-modules.

Let $P_{3}$ be the graph
$$\xymatrix{  {\bullet}^{v_{1,1}} \ar[r]  & {\bullet}^{v_{1,2}} \ar[r]
& {\bullet}^{v_{1,3}} \ar[r]  &
               {\bullet}^{v_{1,4}} \ar@{.>}[r] &  \\
             {\bullet}^{v_{2,1}} \ar[r] \ar[u] & {\bullet}^{v_{2,2}}
\ar[r] \ar[ul] & {\bullet}^{v_{2,3}} \ar[r] \ar[ull] &
{\bullet}^{v_{2,4}}\ar@{.>}[r] \ar[ulll] &  \\
             {\bullet}^{v_{3,1}} \ar[r] \ar[u] & {\bullet}^{v_{3,2}}
\ar[r] \ar[ul] & {\bullet}^{v_{3,3}} \ar[r] \ar[ull] &
{\bullet}^{v_{3,4}}\ar@{.>}[r] \ar[ulll] &  \\
            }$$

As above, the vertices
$v_{1,1},v_{1,2},v_{1,3},\cdot\cdot\cdot$ generate the socle $S$ of $L_{K}%
(P_{3})$ which is a direct sum of isomorphic faithful simple $L_{K}(P_{3}%
)$-modules and $L_{K}(P_{3})/S\cong L_{K}(P_{2})$. Clearly, by Lemma
\ref{Quotient simple} and the description of $L_{K}(P_{2})$ above,
$S_{2}/S\cong Soc(L_{K}(P_{2}))$ is a direct sum of isomorphic simple
$L_{K}(P_{3})$-modules annihilated by the ideal $S$ and that 
$$L_{K}
(P_{3})/S_{2}\cong L_{K}(P_{2})/Soc(L_{K}(P_{2}))\cong Soc[L_{K}
(P_{2})/Soc(L_{K}(P_{2})]$$ 
is a direct sum of isomorphic simple $L_{K}(P_{3})$-modules annihilated by the ideal $S_{2}$. Thus we conclude that
$L_{K}(P_{3})$ has exactly three distinct isomorphism classes of simple right
$L_{K}(P_{3})$-modules.

Proceeding like this, we conclude, by simple induction, that for any
positive integer $n$, the Leavitt path algebra $L_{K}(P_{n})$ of the ``Pyramid''
graph $P_{n}$ with $n$ ``layers" has exactly $n$ distinct isomorphism classes
of simple right $L_{K}(P_{n})$-modules.

Let $P_{\omega}=
{\textstyle\bigcup\limits_{n\in\mathbb{N}}}
P_{n}$ be the ``Pyramid" graph of length $\omega$ constructed inductively and
represented pictorially as follows.

$$\xymatrix{ {\bullet}^{v_{1,1}} \ar[r]  & {\bullet}^{v_{1,2}} \ar[r]
& {\bullet}^{v_{1,3}} \ar[r]  &
               {\bullet}^{v_{1,4}} \ar@{.>}[r] &  \\
              {\bullet}^{v_{2,1}} \ar[r] \ar[u] & {\bullet}^{v_{2,2}}
\ar[r] \ar[ul] & {\bullet}^{v_{2,3}} \ar[r] \ar[ull] &
{\bullet}^{v_{2,4}}\ar@{.>}[r] \ar[ulll] &  \\
              {\bullet}^{v_{3,1}} \ar[r] \ar[u] & {\bullet}^{v_{3,2}}
\ar[r] \ar[ul] & {\bullet}^{v_{3,3}} \ar[r] \ar[ull] &
{\bullet}^{v_{3,4}}\ar@{.>}[r] \ar[ulll] &  \\
{\bullet} \ar[r] \ar[u] \ar@{.}[d] & {\bullet}
\ar[r] \ar[ul] & {\bullet} \ar[r] \ar[ull] &
{\bullet} \ar@{.>}[r] \ar[ulll] &  \\
 \ar@{.}[u] & \ar@{.}[u] & \ar@{.}[u] & \ar@{.}[u] & \ar@{.}[u] & 
    \\            }$$

Again, by induction, it follows that $L_{K}(P_{\omega})$ has exactly $\omega$
distinct isomorphism classes of simple $L_{K}(P_{\omega})$-modules.

The graph $P_{\omega+1}$ is obtained from the graph $P_{\omega}$ by adding a
single vertex $v_{\omega+1}$ and connecting it by an edge to each of the
vertices $v_{j,1}$ for $j<\omega$ in the graph $P_{\omega}$. Specifically,
$(P_{\omega+1})^{0}=(P_{\omega})^{0}\cup\{v_{\omega+1}\}$, $(P_{\omega+1}%
)^{1}=(P_{\omega})^{1}\cup\{e_{\omega+1,j}:j<\omega\}$ where, for each $j$,
$s(e_{\omega+1,j})=v_{\omega+1}$ and $r(e_{\omega+1,j})=v_{j,1}$. If
$S_{\omega}$ denotes the $\omega$-socle being the ideal generated by all the
vertices in $P_{\omega}$, then $L_{K}(P_{\omega+1})/S_{\omega}$ is a simple
$L_{K}(P_{\omega+1})$-module whose annihilator ideal is $S_{\omega}$ and it is
isomorphic to the Leavitt path algebra of a graph $\{\overset{v_{\omega+1}%
}{\bullet}\}$ consisting of a single vertex and no edges. Clearly
$L_{K}(P_{\omega+1})$ has $\omega+1$ distinct isomorphism classes of simple
$L_{K}(P_{\omega+1})$-modules.

Proceeding this way, as was shown in \cite{ARS}, we can construct, by
transfinite induction, a ``Pyramid'' graph $P_{\lambda}$ for each ordinal
$\lambda$. The Leavitt path algebra $L_{K}(P_{\lambda})$ is a semi-artinian
von Neumann ring of Loewy length $\lambda$ such that, for each $\alpha
<\lambda$, the quotient $S_{\alpha+1}/S_{\alpha}$ of successive socles is
isomorphic to the Leavitt path algebra of an infinite line segment (like the
graph $P_{1}$) or a graph $\{\overset{v}{\bullet}\}$ consisting of a single
vertex and no edges, according as $\alpha$ is a successor or a limit ordinal.
Thus $S_{\alpha+1}/S_{\alpha}$ has exactly one isomorphism class of simple
modules (annihilated by $S_{\alpha}$). By transfinite induction, one can then
show that the Leavitt path algebra $L_{K}(P_{\lambda})$ has exactly
$|\lambda|$ isomorphism classes of simple $L_{K}(P_{\lambda})$-modules.

\bigskip

\begin{acknowledgement}
1. We are deeply grateful to the referee whose insightful comments, questions
and suggestions lead to a substantial improvement of the previous version of
this paper.

2. The first-named author was supported by DGI MICIIN-FEDER
MTM2011-28992-C02-01, and by the Comissionat per Universitats i Recerca de la
Generalitat de Catalunya.

3. Part of this work was done when the second-named author visited the
Universitat Autonoma de Barcelona during May 2013 and he gratefully
acknowledges the support and the hospitality of the faculty of the Department
of Mathematics. 

\end{acknowledgement}

\end{document}